\numberwithin{equation}{section}
\newtheorem{Th}{Theorem}[section]
\newtheorem{Lemma}[Th]{Lemma}
\newtheorem{Def}[Th]{Definition}
\newtheorem{Prop}[Th]{Proposition}
\renewcommand{\section}%
   {\setcounter{equation}{0}\@startsection {section}{1}{\z@}{-3.5ex plus -1ex
  minus -.2ex}{2.3ex plus .2ex}{\Large\bf}}
\def\Im{\mathop{\rm Im}\nolimits}
\def\Id{\mathop{\rm Id}\nolimits}
\def\esssup{\mathop{\rm ess\,sup}}
\def\ds{\displaystyle}
\def\R{\mathbb R}
\def\C{\mathbb C}
\def\N{\mathbb N}
\def\Z{\mathbb Z}
\newcommand{\F}{\mathcal{F}}
\newcommand{\Sch}{\mathcal{S}}
\newcommand{\afrac}[2]{\genfrac{}{}{0pt}{1}{#1}{#2}}
\newcommand{\beqsn}{\arraycolsep1.5pt\begin{eqnarray*}}
\newcommand{\eeqsn}{\end{eqnarray*}\arraycolsep5pt}
\newcommand{\beqs}{\arraycolsep1.5pt\begin{eqnarray}}
\newcommand{\eeqs}{\end{eqnarray}\arraycolsep5pt}
\title{Nuclearity of rapidly decreasing ultradifferentiable functions and time-frequency analysis}
\author[Boiti]{Chiara Boiti}
\address{
Dipartimento di Matematica e Informatica \\Universit\`a di Ferrara\\
Via Ma\-chia\-vel\-li n.~30\\
I-44121 Ferrara\\
Italy}
\email{chiara.boiti@unife.it}
\author[Jornet]{David Jornet}
\address{
Instituto Universitario de Matem\'atica Pura y Aplicada IUMPA\\
Universitat Po\-li\-t\`ecni\-ca de Val\`encia\\
Camino de Vera, s/n\\
E-46071 Valencia\\
Spain}
\email{djornet@mat.upv.es}
\author[Oliaro]{Alessandro Oliaro}
\address{Dipartimento di Matematica\\ Universit\`a di Torino\\
 Via Carlo Alberto n.~10\\ I-10123 Torino\\ Italy}
 \email{alessandro.oliaro@unito.it}
\author[Schindl]{Gerhard Schindl}
\address{Fakult\"at f\"ur Mathematik\\ Universit\"at Wien\\
Oskar-Morgenstern-Platz n.~1\\ A-1090 Wien\\ Austria}
 \email{gerhard.schindl@univie.ac.at}
\begin{document}

\keywords{nuclear spaces, weighted spaces of ultradifferentiable functions of Beurling type, Gabor frames, time-frequency analysis}
\subjclass[2010]{Primary 46A04, 42C15, 46F05; Secondary  42B10}

\begin{abstract}
We use techniques from time-frequency analysis to show that the space $\Sch_\omega$ of rapidly decreasing $\omega$-ultradifferentiable functions is nuclear for every weight function  $\omega(t)=o(t)$ as $t$ tends to infinity. Moreover, we prove that,
for a sequence $(M_p)_p$ satisfying the classical condition $(M1)$ of Komatsu, the
space of Beurling type $\Sch_{(M_p)}$ when defined with $L^{2}$~norms is nuclear exactly when condition $(M2)'$ of Komatsu holds.
\end{abstract}

\maketitle

%
\markboth{\sc  Nuclearity of rapidly decreasing ultradifferentiable functions...}
 {\sc C.~Boiti, D.~Jornet, A.~Oliaro and G.~Schindl}

\section{Introduction}

One of the main properties of a nuclear space is that the Schwartz kernel theorem holds, which gives, for instance, a different representation of a continuous and linear pseudodifferential operator as an integral operator in terms of its kernel. This is very useful for the study of the propagation of singularities or the behaviour  of wave front sets of pseudodifferential  operators. See, for example, \cite{AJ,BJO-Gabor,fgj1,fgj2,R,RW} and the references therein. 

In fact, in \cite{BJO-Gabor} the first three authors of the present work  imposed the following condition on the weight function: there is $H>1$ such that for every $t>0$, $2\omega(t)\le \omega(Ht)+H$ (see \cite[Corollary 16\,(3)]{BMM}), to have that the space $\Sch_{\omega}(\R^{d})$ is nuclear (see \cite{BJO-Rodino}). Hence they could analyse the kernel of some pseudodifferential operators \cite[Section 4]{BJO-Gabor}. In the present paper, we complete the study begun in \cite{BJO-Rodino} and prove that $\Sch_{\omega}(\R^{d})$ is nuclear for every weight function $\omega(t)=o(t)$ as $t$ tends to infinity (see Definition~\ref{defomega}).  Hence, now the powers of the logarithm $\omega(t)=\log^{\beta}(1+t)$, $\beta>0$, are allowed as weight functions and, in particular, we recover a known result for the weight $\omega(t)=\log(1+t),$ namely, that the Schwartz class $\Sch(\R^{d})$ is a nuclear space.

  To see that $\Sch_{\omega}(\R^{d})$ is nuclear we establish an isomorphism, which is new in the literature, with some Fr\'echet sequence  space.~We use expansions in terms of Gabor frames, that are a fundamental tool in time-frequency analysis.~This is motivated by the rapid decay of the Gabor coefficients of a function in $\Sch_{\omega}(\R^{d})$ when 
  $\omega$ is a subadditive function, as we showed in \cite{BJO-Gabor}. More precisely, 
    we proved  that $u\in\Sch_\omega(\R^{d})$
 if and only if 
 \beqsn
 \sup_{\sigma\in\alpha_0\Z^d\times\beta_0\Z^d}e^{\lambda\omega(\sigma)}
 |V_\varphi u(\sigma)|<+\infty,\qquad\mbox{ for all }\lambda>0,
 \eeqsn
 where $\alpha_0,\beta_0>0$ are sufficiently small so that $\{\Pi(\sigma)\varphi\}_{\sigma\in
 \alpha_0\Z^d\times\beta_0\Z^d}$ is a Gabor frame in $L^2(\R^d)$ for a fixed
 window function $\varphi\in\Sch_\omega(\R^{d})$,  $V_\varphi u$ is the short-time
 Fourier transform of $u$ and $\Pi(\sigma)$ is the time-frequency
 shift defined as $\Pi(\sigma)\varphi(y)=e^{i\langle y,\beta_0 n\rangle} \varphi(y-\alpha_0k)$,
 for $\sigma=(\alpha_0k,\beta_0n)$. The usual properties of modulation spaces in \cite{BJO-Gabor} hold only when the weight function $\omega$ is subadditive. However, the expansion in terms of Gabor frames is still possible when the weight is non subadditive and satisfies $\omega(t)=o(t)$ as $t$ tends to infinity. In fact, we prove here that 
$\Sch_\omega(\R^{d})$ is isomorphic to a topological subspace of the sequence space
\beqs
\label{Lambdaomega}
\tilde{\Lambda}_\omega:=\{c=(c_\sigma)_{\sigma\in\alpha_0\Z^d\times\beta_0\Z^d}:
\|c\|_k:=\sup_{\sigma\in\alpha_0\Z^d\times\beta_0\Z^d}|c_\sigma|e^{k\omega(\sigma)}
<+\infty,
\ \forall \, k\in\N\}. 
\eeqs
The isomorphism is defined in \eqref{iso} by the restriction on its image of the \emph{analysis} operator, that maps $u\in\Sch_\omega(\R^{d})$ to its Gabor coefficients
$\{V_\varphi u(\sigma)\}_{\sigma\in \alpha_0\Z^d\times\beta_0\Z^d}$. As a consequence,  $\Sch_{\omega}(\R^{d})$ is nuclear by an application of Grothendieck-Pietsch criterion to the  space
$\tilde{\Lambda}_\omega$ (Proposition~\ref{cor1}). This isomorphism is not the only one existing in the literature, and it should be compared with the one given by Aubry~\cite{A}, \emph{only} for the one-variable case, obtaining that $\Sch_{\omega}(\R)$ is isomorphic to the different sequence space 
\beqsn
\Lambda_{\omega}:=\Big\{(c_k)_{k\in\N_0}:\
\sup_{k\in \N_0}|c_k|e^{j\omega(k^{1/2})}<+\infty,\ \forall \,j\in\N_0\Big\}.
\eeqsn
 Aubry uses expansions in terms of the Hermite functions, as Langenbruch \cite{L} did previously for spaces defined by sequences in the sense of Komatsu. 
 
Finally, in the last section of the paper, and without using techniques from time frequency-analysis, we characterize when the Beurling space of ultradifferentiable functions  $\Sch_{(M_{p})}(\R^{d})$ (see formula \eqref{SMp} for the definition) in the sense of Komatsu is nuclear. We can give such a characterization when the space is defined by $L^{2}$ norms. We explain and motivate a little bit this result.  Pilipovi\'c, Prangoski and Vindas~\cite{PPV} showed that 
\beqs
\label{SMp-infty}
\qquad
S_{(M_p)}(\R^d):=\Big\{f\in C^\infty(\R^d): \
\sup_{\alpha,\beta\in\N_0^d}\sup_{x\in\R^d}\frac{j^{|\alpha+\beta|}}{M_{|\alpha+\beta|}}
\|x^\alpha \partial^\beta f(x)\|_\infty<+\infty,\ \forall\, j\in\N \Big\},
\eeqs is nuclear when $(M_p)_p$ satisfies the standard conditions $(M1)$ (defined below in formula \eqref{M1}) and $(M2)$ (that we do not define here because it is not used), which is stronger than $(M2)'$, defined below in formula \eqref{M2}. On the other hand, using the isomorphism of \cite{L}, we proved in \cite{BJO-Rodino} that the space $S_{(M_p)}(\R^d)$ is nuclear if $(M_p)_p$ satisfies that
there is $H>0$ such that for any $C >0$ there is $B >0$ with 
\beqs
\label{12L}
s^{s/2} M_p\leq BC^sH^{s+p}
M_{s+p},\qquad\forall s,p\in\N_0
\eeqs
and $(M2)'$ {\em (stability under differential operators)}:
\beqs\label{M2}
\exists A,H>0:\quad M_{p+1}\leq AH^p M_p,
\qquad\forall p\in\N_0.
\eeqs
The condition \eqref{12L} is quite natural and not restrictive at all and it is used by Langenbruch~\cite{L} to show that the Hermite functions are elements of $S_{(M_p)}(\R^d)$. Moreover, Langenbruch also proves in \cite[Remark 2.1]{L} that under these two conditions \eqref{12L} and \eqref{M2}, $S_{(M_p)}(\R^d)=\Sch_{(M_p)}(\R^d)$.  If we do not assume \eqref{M2} and consider only $\Sch_{(M_p)}(\R^d)$  (the space defined with $L^{2}$~norms), after a careful reading of the proofs of some results of \cite{L} in the Beurling case and the use of  techniques of Petzsche~\cite{P}, we are able to prove here that under the additional conditions $(M1)$ {\em (logarithmic convexity)}:
\beqs\label{M1}
M_p^2\leq M_{p-1}M_{p+1},\qquad \forall\, p\in\N,
\eeqs
and that 
$M_p^{1/p}\to+\infty$ as $p\to+\infty$,  $\Sch_{(M_p)}(\R^d)$ is nuclear if and only if $(M_{p})_{p}$ satisfies $(M2)'$.

The paper is organized as follows. In Section~\ref{sec2}, we show that Gabor frames have a stable behaviour with the only assumption $\omega(t)=o(t)$ as $t$ tends to infinity on the weight function. Indeed, we see that the \emph{analysis} and \emph{synthesis} operators are well defined and continuous in the suitable spaces (Propositions \ref{prop1Ale} and \ref{prop2Ale}), defining an isomorphism between $\Sch_\omega(\R^{d})$ and a subspace of $\tilde{\Lambda}_\omega$.  
In Section~\ref{sec3} we recover for this setting some known properties of K\"othe echelon spaces to see that the sequence space $\tilde\Lambda_\omega$ is  nuclear. And, finally, in Section~\ref{sec4} we characterize the nuclearity of $\Sch_{(M_p)}(\R^d)$.

\section{Gabor frame operators in $\Sch_\omega(\R^{d})$}
\label{sec2}

Let us condider weight functions of the form:
\begin{Def}
\label{defomega}
A {\em weight function}  is a continuous increasing function
$\omega:\ [0,+\infty)\to[0,+\infty)$ satisfying the following properties:
\begin{itemize}
\vspace{1mm}
\item[$(\alpha)$]
there is $L\geq 1\mbox{ such that }\omega(2t)\leq L(\omega(t)+1),$ for each $t\geq0;$
\vspace{1mm}
\item[$(\beta)$]
$
\omega(t)=o(t)\quad\mbox{as}\ t\to+\infty;
$
\vspace{1mm}
\item[$(\gamma)$]
there are $a\in\R, b>0\mbox{ such that }\omega(t)\geq a+b\log(1+t),\ t\geq0;
$
\vspace{1mm}
\item[$(\delta)$]
the map $t\mapsto 
\varphi_\omega(t):=\omega(e^t)\ \mbox{is convex}.
$
\end{itemize}
\vspace{1mm}
For $\zeta\in\C^d$, we put $\omega(\zeta):=\omega(|\zeta|)$, where $|\zeta|$ denotes the Euclidean norm of $\zeta$.
\end{Def}

Note that condition $(\alpha)$ implies
\beqs
\label{L}
\omega(t_1+t_2)\leq L(\omega(t_1)+\omega(t_2)+1),\qquad t_1,t_2\geq0.
\eeqs

We denote by $\varphi^*_\omega$ the {\em Young conjugate} of $\varphi_\omega$,
defined by
\beqsn
\varphi^*_\omega(s):=\sup_{t\geq0}\{ts-\varphi_\omega(t)\}.
\eeqsn
We recall that $\varphi^*_\omega$ is an increasing and convex function satisfying
$(\varphi^{*}_\omega)^{*}=\varphi_\omega$ (see \cite{hor-cvx}).
Moreover $\varphi^*_\omega(s)/s$ is increasing. For a collection of further well-known properties of $\varphi^*_\omega$ we refer, for instance, to \cite[Lemma~2.3]{BJO-PW}.

We consider the following notation for the Fourier transform of $u\in L^1(\R^d)$:
\beqsn
\F(u)(\xi)=\hat{u}(\xi):=\int_{\R^d}u(x)e^{-i\langle x,\xi\rangle}dx,
\qquad\xi\in\R^d,
\eeqsn
with standard extensions to more general spaces of functions or distributions.
We recover from \cite{Bj} the following
\begin{Def}
\label{Somega}
The space $\Sch_\omega(\R^d)$ is the set of all $u\in L^1(\R^d)$ such that
$u,\hat u\in C^\infty(\R^d)$ and for each $\lambda>0$ and each $\alpha\in\N_0^d$
we have
\beqsn
\sup_{x\in\R^d}e^{\lambda\omega(x)}|\partial^\alpha u(x)|<+\infty
\quad\mbox{and}\quad
\sup_{\xi\in\R^d}e^{\lambda\omega(\xi)}|\partial^\alpha \hat{u}(\xi)|<+\infty.
\eeqsn
The corresponding strong dual of ultradistributions will be denoted by
$\Sch'_\omega(\R^d)$.
\end{Def}

We denote by $T_x$, $M_\xi$ and $\Pi(z)$, respectively, the {\em translation}, the
{\em modulation} and the {\em phase-space shift} operators, defined by
\beqsn
&&T_x f(y)=f(y-x),\quad M_\xi f(y)=e^{i\langle y,\xi\rangle}f(y)\\
&& \Pi(z)f(y)=M_\xi T_xf(y)=e^{i\langle y,\xi\rangle}f(y-x)
\eeqsn
for $x,y,\xi\in\R^d$ and $z=(x,\xi)$.

For a window function $\varphi\in\Sch_\omega(\R^d)\setminus\{0\}$ the {\em short-time
Fourier transform} (briefly STFT) of $f\in\Sch'_\omega(\R^d)$ is defined, for
$z=(x,\xi)\in\R^{2d}$, by
\beqs
\label{3}
V_\varphi f(z):=&&\langle f,\Pi(z)\varphi\rangle\\
\label{4}
=&&\int_{\R^d}f(y)\overline{\varphi(y-x)}e^{-i\langle y,\xi\rangle}dy,
\eeqs
where the brackets $\langle\cdot,\cdot\rangle$ in \eqref{3} and the (formal) integral in \eqref{4} denote
the conjugate linear action of $\Sch'_\omega$ on $\Sch_\omega$,
consistent with the inner product $\langle\cdot,\cdot\rangle_{L^2}$.

By condition $(\gamma)$ of Definition~\ref{defomega} it is easy to deduce  that $\Sch_\omega(\R^d)\subset\Sch(\R^d)$. Hence, $\Sch_\omega(\R^d)$ can be equivalently defined as the set of all $u\in\Sch(\R^d)$ that satisfy the conditions of Definition~\ref{Somega}.
The Fourier transform $\mathcal{F}:\Sch_\omega(\R^d)\to \Sch_\omega(\R^d)$ is a continuous automorphism, that can be extended in the usual way to $\Sch_\omega^\prime(\R^d)$ and, moreover, the space $\Sch_\omega(\R^d)$ is an algebra under multiplication and convolution. On the other hand, for $u,\psi\in\Sch_\omega(\R^d)$ we have $V_\psi u\in \Sch_\omega(\R^{2d})$. Moreover, for $u\in\Sch'_\omega(\R^d)$ the short-time Fourier transform is well defined and belongs to
$\Sch_\omega^\prime(\R^{2d})$. We refer to \cite{Bj,GZ,BJO-Gabor}
for subadditive weights, and to \cite{BJO-Wigner,BJO-PW} for
non-subadditive weights; in particular, all results of \cite[Section~2]{BJO-Gabor} are valid in the non-subadditive case also.

We shall need the following theorem from \cite{BJO-PW}:
\begin{Th}
\label{propSomega}
Given a function $u\in\Sch(\R^d)$ and $1\leq p,q\leq+\infty$, we have that $u\in\Sch_\omega(\R^d)$ if and only
if one of the following equivalent conditions is satisfied:
\begin{itemize}
\item[$(a)$]
\begin{itemize}
\item[i)]
$\forall\lambda>0,\ \alpha\in\N^d_0\ \exists C_{\alpha,\lambda}>0$ s.t.
$\ds\|e^{\lambda\omega(x)} \partial^\alpha u(x)\|_{L^p}\leq C_{\alpha,\lambda}\,$,
\item[ii)]
$\forall\lambda>0,\ \alpha\in\N^d_0\ \exists C_{\alpha,\lambda}>0$ s.t.
$\ds\|e^{\lambda\omega(\xi)} \partial^\alpha\hat{u}(\xi)\|_{L^q}\leq C_{\alpha,\lambda}\,$;
\end{itemize}
\item[$(b)$]
\begin{itemize}
\item[i)]
$\forall\lambda>0,\ \alpha\in\N^d_0\ \exists C_{\alpha,\lambda}>0$ s.t.
$\ds\|e^{\lambda\omega(x)} x^\alpha u(x)\|_{L^p}\leq C_{\alpha,\lambda}\,$,
\item[ii)]
$\forall\lambda>0,\ \alpha\in\N^d_0\ \exists C_{\alpha,\lambda}>0$ s.t.
$\ds\|e^{\lambda\omega(\xi)} \xi^\alpha\hat{u}(\xi)\|_{L^q}\leq C_{\alpha,\lambda}\,$;
\end{itemize}
\item[$(c)$]
\begin{itemize}
\item[i)]
$\forall\lambda>0\ \exists C_{\lambda}>0$ s.t.
$\ds\|e^{\lambda\omega(x)} u(x)\|_{L^p}\leq C_{\lambda}\,$,
\item[ii)]
$\forall\lambda>0\ \exists C_{\lambda}>0$ s.t.
$\ds\|e^{\lambda\omega(\xi)}\hat{u}(\xi)\|_{L^q}\leq C_{\lambda}\,$;
\end{itemize}
\item[$(d)$]
\begin{itemize}
\item[i)]
$\forall\lambda>0,\beta\in\N_0^d\ \exists C_{\beta,\lambda}>0$ s.t.
$\ds\sup_{\alpha\in\N_0^d}\|x^\beta \partial^\alpha u(x)\|_{L^p}e^{-\lambda
\varphi^*_{\omega}\left(\frac{|\alpha|}{\lambda}\right)}\leq C_{\beta,\lambda}\,$,
\item[ii)]
$\forall\mu>0,\alpha\in\N_0^d\ \exists C_{\alpha,\mu}>0$ s.t.
$\ds\sup_{\beta\in\N_0^d}\|x^\beta \partial^\alpha u(x)\|_{L^q}e^{-\mu
\varphi^*_{\omega}\left(\frac{|\beta|}{\mu}\right)}\leq C_{\alpha,\mu}\,$;
\end{itemize}
\item[$(e)$]
$\forall\mu,\lambda>0\ \exists C_{\mu,\lambda}>0$ s.t.
$\ds\sup_{\alpha,\beta\in\N_0^d}\|x^\beta \partial^\alpha u(x)\|_{L^p}
e^{-\lambda\varphi^*_{\omega}\left(\frac{|\alpha|}{\lambda}\right)}e^{-\mu
\varphi^*_{\omega}\left(\frac{|\beta|}{\mu}\right)}\leq C_{\mu,\lambda}\,$;
\item[$(f)$]
$\forall\lambda>0\ \exists C_{\lambda}>0$ s.t.
$\ds\sup_{\alpha,\beta\in\N_0^d}\|x^\beta \partial^\alpha u(x)\|_{L^p}e^{-\lambda
\varphi^*_{\omega}\left(\frac{|\alpha+\beta|}{\lambda}\right)}\leq C_{\lambda}\,$;
\item[$(g)$]
$\forall\mu,\lambda>0\ \exists C_{\mu,\lambda}>0$ s.t.
$\ds\sup_{\alpha\in\N_0^d}\|e^{\mu\omega(x)} \partial^\alpha u(x)\|_{L^p}
e^{-\lambda\varphi^*_{\omega}\left(\frac{|\alpha|}{\lambda}\right)}\leq C_{\mu,\lambda}\,$;
\item[$(h)$]
Given $\psi\in\Sch_\omega(\R^d)\setminus\{0\}$, $\forall\lambda>0\ \exists C_{\lambda}>0$ s.t.
$\ds\|V_\psi u(z) e^{\lambda\omega(z)}\|_{L^{p,q}}\leq C_\lambda$.
\end{itemize}
\end{Th}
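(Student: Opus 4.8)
The plan is to prove this characterization — really, a catalogue of equivalent descriptions of $\Sch_\omega(\R^d)$ — as a web of implications organized around three standard tools: \emph{(i)} the Fourier transform, a topological automorphism of $\Sch(\R^d)$ that intertwines $\partial^\alpha$ with multiplication by $(i\xi)^\alpha$ and multiplication by $x^\alpha$ with $i^{|\alpha|}\partial^\alpha$, hence exchanges position with frequency and differentiation with monomial growth and reduces every statement about $\hat u$ to the analogous one about $u$; \emph{(ii)} the Legendre--Young duality $e^{\lambda\omega(t)}\asymp\sup_{j\in\N_0}t^{j}e^{-\lambda\varphi^*_\omega(j/\lambda)}$ (up to constants and a change of $\lambda$), its multi-index form $e^{\lambda\omega(x)}\asymp\sup_{\beta\in\N_0^d}|x^{\beta}|e^{-\lambda\varphi^*_\omega(|\beta|/\lambda)}$, and the routine convexity properties of $\varphi^*_\omega$ from \cite[Lemma~2.3]{BJO-PW} (monotonicity of $s\mapsto\varphi^*_\omega(s)/s$, absorption of factors $2^{|\alpha|}$, the estimates linking $\varphi^*_\omega(|\alpha|/\lambda)+\varphi^*_\omega(|\beta|/\lambda)$ with $\varphi^*_\omega(|\alpha+\beta|/\lambda)$), which is what converts ``$e^{\lambda\omega}$''--weighted bounds into ``$\varphi^*_\omega$''--weighted ones and back; and \emph{(iii)} conditions $(\alpha)$ and $(\gamma)$, which give $e^{-N\omega}\in\bigcap_{1\le p\le\infty}L^p(\R^d)$ for $N$ large, let any monomial be dominated by $e^{\varepsilon\omega}$, and make $\omega$ comparable to a constant on unit balls. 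Combining Hölder's inequality (to drop from $L^\infty$ to $L^p$) with a local Sobolev estimate on unit balls applied to $\partial^\alpha u$ (to lift $L^p$ back to $L^\infty$) shows the exponents $p,q$ in (a) and in (d)--(g), and the mixed exponent in (h), to be immaterial; so it suffices to work with $p=q=\infty$, for which (a) is exactly Definition~\ref{Somega}.

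The equivalences then split into a formal part and a substantial one. \emph{Formally}, once one knows that the $L^\infty$ norms of $\partial^\alpha u$ and $\partial^\alpha\hat u$ decay with constants growing at most like $e^{\lambda\varphi^*_\omega(|\alpha|/\lambda)}$ — which is condition (g) and, after Fourier transform, its $\hat u$--analogue — all of (a), (b), (d), (e), (f) follow by passing among the various formulations with tools \emph{(i)} and \emph{(ii)}, merging a family of bounds ``for each $\beta$, supremum over $\alpha$'' into one joint supremum as in the Komatsu--Petzsche diagonalization~\cite{P}, and trading $\varphi^*_\omega(|\alpha|/\lambda)+\varphi^*_\omega(|\beta|/\lambda)$ for $\varphi^*_\omega(|\alpha+\beta|/\lambda)$ by convexity; conversely each of (b)--(g) trivially implies the defining condition (a) (or (c)). The \emph{substantial} point is to establish that $\varphi^*_\omega$--growth in the first place, i.e.\ to pass from the ``decay only'' conditions to (g); the engine is the following lemma, used repeatedly: if $\hat u$ decays faster than every $e^{-N\omega(\xi)}$, then
\begin{align*}
\|\partial^\alpha u\|_{L^\infty}\le (2\pi)^{-d}\big\| |\xi|^{|\alpha|}\hat u\big\|_{L^1}
&\le(2\pi)^{-d}C_N\int_{\R^d}|\xi|^{|\alpha|}e^{-N\omega(\xi)}\,d\xi\\
&\le C_\mu\, e^{\mu\varphi^*_\omega(|\alpha|/\mu)},
\end{align*}
with $C_\mu$ \emph{independent of $\alpha$}, since Young's inequality gives $|\xi|^{|\alpha|}e^{-N\omega(\xi)}\le e^{\frac N2\varphi^*_\omega(2|\alpha|/N)}e^{-\frac N2\omega(\xi)}$ and $e^{-\frac N2\omega}\in L^1$ for $N$ large by $(\gamma)$; the symmetric statement with $u\leftrightarrow\hat u$ holds verbatim.

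With the lemma available, one proves (c)$\Rightarrow$(g): since (c) is symmetric under $u\leftrightarrow\hat u$, part (c)(ii) gives the uniform bound $\|\partial^\alpha u\|_{L^\infty}\le C_\mu e^{\mu\varphi^*_\omega(|\alpha|/\mu)}$ on the \emph{size} of the derivatives, and then (c)(i) — rapid decay of $u$ itself — is combined with these controlled higher derivatives through a Landau--Kolmogorov / Gagliardo--Nirenberg interpolation on unit balls (a small pointwise value of $u$ together with control of its higher derivatives forces $\partial^\alpha u$ to be correspondingly small there); tracking the constants yields $|\partial^\alpha u(x)|\le C_{\mu,N}e^{\mu\varphi^*_\omega(|\alpha|/\mu)}e^{-N\omega(x)}$ for all $\mu,N$, which is (g). The remaining two conditions reduce to (c): (b)$\Leftrightarrow$(c) by absorbing monomials into $e^{\varepsilon\omega}$; and (h)$\Leftrightarrow$(c) via the short-time Fourier transform, where (c)$\Rightarrow$(h) uses $|V_\psi u(x,\xi)|\le(|u|*|\psi(-\,\cdot\,)|)(x)$ for decay in $x$ uniform in $\xi$, the identity $|V_\psi u(x,\xi)|=|V_{\hat\psi}\hat u(\xi,-x)|$ for decay in $\xi$ uniform in $x$, and \eqref{L} in the form $\omega(x,\xi)\le L(\omega(x)+\omega(\xi)+1)$ to combine them into honest decay of $V_\psi u$ in $z=(x,\xi)$, while (h)$\Rightarrow$(c) integrates one variable out of the inversion formula $u=\|\psi\|_{L^2}^{-2}\int V_\psi u(z)\,\Pi(z)\psi\,dz$ (and of its frequency-side analogue for $\hat u$), the convolution with $\psi$, resp.\ $\hat\psi$, being controlled again by \eqref{L}; differentiating that same formula under the integral sign, so that $\partial^\alpha$ falls on $\Pi(z)\psi=M_\xi T_x\psi$ and produces $\sum_{\beta\le\alpha}\binom{\alpha}{\beta}(i\xi)^{\alpha-\beta}\partial^\beta\psi(\,\cdot\,-x)$, even gives (h)$\Rightarrow$(g) directly.

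I expect the genuine obstacle to be precisely the passage from the ``decay only'' conditions to the ultradifferentiable ones — concretely, the Landau--Kolmogorov interpolation that upgrades ``$u$ small, higher derivatives controlled'' to ``$\partial^\alpha u$ small with the right constants''. There the local hypothesis $(\alpha)$ on $\omega$ is essential (the interpolation lives on fixed-size balls, on which $\omega$ must be comparable to a constant), and one must choose the parameters so that the $\varphi^*_\omega$--factors survive the interpolation uniformly in $\alpha$; in the time-frequency route the analogous care is needed to differentiate the inversion integral term by term while keeping the window's contribution uniformly bounded. Everything else is careful but routine manipulation of Fourier transforms, Young conjugates, and the properties of $\varphi^*_\omega$.
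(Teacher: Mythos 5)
You could not have known this, but the paper contains no proof of this statement: it is imported verbatim from \cite{BJO-PW} (``We shall need the following theorem from \cite{BJO-PW}''), so there is no in-paper argument to compare yours against. I can therefore only judge the proposal on its own terms and against the standard arguments of \cite{GZ} that it extends to non-subadditive weights. On those terms the architecture is right: the reduction of the exponents $p,q$ to $\infty$ by H\"older and local Sobolev estimates, the ``formal'' web of implications among (a), (b), (d), (e), (f) via the Fourier transform and the Young conjugate, and above all the correct isolation of the one substantial implication --- upgrading the decay-only conditions (c) to the joint bounds (e)--(g). Your lemma, $\|\partial^\alpha u\|_{L^\infty}\le(2\pi)^{-d}\||\xi|^{|\alpha|}\hat u\|_{L^1}\le C_\mu e^{\mu\varphi^*_\omega(|\alpha|/\mu)}$ via the splitting $|\xi|^{|\alpha|}e^{-N\omega(\xi)}\le e^{\frac N2\varphi^*_\omega(2|\alpha|/N)}e^{-\frac N2\omega(\xi)}$, is exactly the right engine for producing the $\varphi^*_\omega$-growth uniformly in $\alpha$ from the decay of $\hat u$ alone.

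Two steps are left too thin to count as proved. First, the Landau--Kolmogorov interpolation in (c)$\Rightarrow$(g): for the bookkeeping to close you must interpolate $\partial^\alpha u$ between $u$ and derivatives of order $n\approx 2|\alpha|$, so that $\bigl(e^{\mu\varphi^*_\omega(n/\mu)}\bigr)^{|\alpha|/n}$ becomes $e^{\frac\mu2\varphi^*_\omega(|\alpha|/(\mu/2))}$ and the spatial factor becomes $e^{-\lambda\omega(x)/2}$, and you must absorb the $n$-dependent constants of the inequality on unit balls into $e^{\mu\varphi^*_\omega(|\alpha|/\mu)}$; you flag the difficulty but do not exhibit the choice of $n$, which is the entire content of the step. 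Second, the alternative route (h)$\Rightarrow$(g), which you rightly observe bypasses the interpolation, has a latent circularity: differentiating the inversion formula under the integral sign requires the bound $|\partial^\beta\psi(w)|\le C e^{\mu\varphi^*_\omega(|\beta|/\mu)}e^{-N\omega(w)}$ for the window, i.e.\ condition (g) for $\psi$ --- the very implication being established. This is repaired by first fixing a window for which (g) is an explicit computation (the Gaussian) and then passing to arbitrary $\psi\in\Sch_\omega(\R^d)\setminus\{0\}$ through the change-of-window inequality $|V_{\psi}u|\le C\,|V_{\psi_0}u|*|V_{\psi}\psi_0|$ combined with \eqref{L}, but this bootstrap must be said explicitly, since condition (h) is stated for an arbitrary window. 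With these two repairs the plan is a complete proof.
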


Let us set, for $\lambda\in\R\setminus\{0\}$,
\beqsn
m_\lambda(z)=e^{\lambda\omega(z)},\qquad z\in\R^{2d},
\eeqsn
and consider the weighted $L^{p,q}$ spaces
\beqsn
L^{p,q}_{m_\lambda}(\R^{2d}):=\Big\{&&\!\!\!F\ \mbox{measurable on $\R^{2d}$ such that}\\
&&\|F\|_{L^{p,q}_{m_\lambda}}:=\Big(\int_{\R^{d}}\Big(\int_{\R^{d}}|F(x,\xi)|^pm_\lambda (x,\xi)^pdx\Big)^{q/p}
d\xi\Big)^{1/q}<+\infty\Big\},
\eeqsn
for $1\leq p,q<+\infty$, and
\beqsn
L^{\infty,q}_{m_\lambda}(\R^{2d}):=\Big\{&&\!\!\!F\ \mbox{measurable on $\R^{2d}$ such that}\\
&&\|F\|_{L^{\infty,q}_{m_\lambda}}:=\Big(\int_{\R^{d}}\big(\esssup_{x\in\R^d}|F(x,\xi)|
m_\lambda(x,\xi)\big)^{q}
d\xi\Big)^{1/q}<+\infty\Big\},\\
L^{p,\infty}_{m_\lambda}(\R^{2d}):=\Big\{&&\!\!\!F\ \mbox{measurable on $\R^{2d}$ such that}\\
&&\|F\|_{L^{p,\infty}_{m_\lambda}}:=\esssup_{\xi\in\R^d}
\Big(\int_{\R^{d}}|F(x,\xi)|^pm_\lambda(x,\xi)^p dx\Big)^{1/p}
<+\infty\Big\},
\eeqsn
for $1\leq p,q\leq+\infty$ with $p=+\infty$ or $q=+\infty$ respectively. 
If $p=q$ we write $L^p_{m_\lambda}(\R^d)=L^{p,q}_{m_\lambda}(\R^d)$.

%

Here we consider generic weight functions $\omega$ satisfying
$(\alpha)$ of Definition~\ref{defomega} (weaker than subadditivity).
In this case modulation spaces lack several properties. Hence we prove directly
some results on Gabor frames in $\Sch_\omega(\R^d)$ without using modulation spaces.
%
If $\omega$ is subadditive we know, by Theorem~4.2 of \cite{GZ},
that for any fixed $\varphi\in
\Sch_\omega(\R^d)$ its dual window $\psi$, in the sense of the theory of Gabor frames (see Gr\"ochenig~\cite{G}), belongs to $\Sch_\omega(\R^d)$ (see also
 \cite[Thm. 4.2]{GL}  and \cite{J}).
 In our case we will fix
$\varphi_0(x):=e^{-|x|^2}$  the Gaussian function,
$\alpha_0,\beta_0>0$ such that $\{\Pi(\sigma)\varphi_0\}_{\sigma\in\alpha_0\Z^d\times\beta_0\Z^d}$ is a Gabor frame
for $L^2(\R^d)$, and then prove that the canonical dual window $\psi_0$ of
$\varphi_0$ is in $\Sch_\omega(\R^d)$. To this aim we start by the following

\begin{Lemma}
\label{lemmasubad}
Let $\omega$ be a weight function. There exists then a subadditive weight function
$\sigma$ such that $\omega(t)=o(\sigma(t))$ as $t\to+\infty$.
\end{Lemma}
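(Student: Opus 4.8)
The goal is to construct, from a weight function $\omega$ satisfying $(\alpha)$–$(\delta)$ (in particular the sublinear growth $(\beta)$), a subadditive weight function $\sigma$ that dominates $\omega$ strictly, i.e. $\omega(t)=o(\sigma(t))$. The natural candidate is built from the \emph{least concave majorant}. My plan is the following. First I would pass from the condition $\omega(t)=o(t)$ to a more usable statement: choose an increasing sequence $t_j\nearrow+\infty$ with $\omega(t)\le \frac{1}{j}\,t$ for all $t\ge t_j$, or, more flexibly, pick an increasing unbounded concave function $\rho$ with $\rho(t)\to+\infty$ such that $\omega(t)\le t/\rho(t)$ eventually; such a $\rho$ exists precisely because $\omega(t)/t\to 0$ (a standard diagonal/interpolation argument: the pointwise-decreasing function $t\mapsto \omega(t)/t$ tends to $0$, so one can underestimate it by a continuous decreasing function going to $0$ whose reciprocal is concave after smoothing). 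Then set, roughly, $\sigma_0(t):=t/\rho(t)\cdot(\text{correction})$, or more simply take $\sigma(t)$ to be the least concave majorant of some auxiliary function that sits between $\omega$ and a sublinear concave function.

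The cleaner route, which I expect to use, is: since $\omega(t)=o(t)$, the function $\omega$ has a \emph{concave majorant that is still $o(t)$}. Concretely, let $\widehat\omega$ be the least concave majorant of $\omega$ on $[0,\infty)$. A concave increasing function is automatically subadditive (up to the value at $0$; here one checks $\widehat\omega(s+t)\le \widehat\omega(s)+\widehat\omega(t)$ using concavity together with $\widehat\omega(0)\ge 0$, or one subtracts the constant $\widehat\omega(0)$), so the main point is to verify that $\widehat\omega$ is still sublinear, $\widehat\omega(t)=o(t)$, and then one still needs the strict domination $\omega=o(\sigma)$, not merely $\omega\le\sigma$. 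To get the strict domination I would not take $\sigma=\widehat\omega$ directly but rather interpose a second step: having a subadditive sublinear $\widehat\omega\ge\omega$, define $\sigma(t):=\widehat\omega(t)\cdot\nu(t)$ — no, that breaks subadditivity — instead use $\sigma(t):=\int_0^t \frac{\widehat\omega(s)}{s}\,\kappa(s)\,ds$ style constructions, or most simply: apply the $o(t)$ property to $\widehat\omega$ itself to get another least-concave-majorant iterate, and note that one can always find a subadditive $\sigma$ with $\widehat\omega(t)/\sigma(t)\to 0$ by the same $o(t)\Rightarrow$ "dominated by a concave sublinear function of slower growth" principle, applied one level up. Then $\omega\le\widehat\omega=o(\sigma)$ gives $\omega=o(\sigma)$. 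One must also check that $\sigma$ inherits $(\gamma)$, i.e. $\sigma(t)\ge a+b\log(1+t)$ — automatic since $\sigma\ge\widehat\omega\ge\omega$ — and that $\sigma$ is a genuine weight function in the sense of Definition~\ref{defomega}, which for a subadditive increasing unbounded function amounts to checking $(\alpha)$ (trivial from subadditivity: $\sigma(2t)\le 2\sigma(t)\le L(\sigma(t)+1)$ with $L=2$), $(\delta)$ (convexity of $t\mapsto\sigma(e^t)$: this one needs care and may force replacing $\sigma$ by $\sup_{s\le t}$ of a convexified version), and continuity.

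The key steps in order: (1) from $\omega(t)/t\to 0$ extract a concave, increasing, unbounded auxiliary function $h$ with $h(t)\to\infty$ but $h(t)=o(t)$ and $\omega(t)\le h(t)$ for $t$ large — handle small $t$ by adding a constant or by monotonicity; (2) show any concave increasing $h$ with $h(0)\ge 0$ is subadditive; (3) upgrade to strict domination by iterating the construction once more, producing $\sigma\ge h$ concave increasing unbounded with $h=o(\sigma)$, hence $\omega=o(\sigma)$; (4) repair property $(\delta)$ for $\sigma$ (convexity of $\sigma\circ\exp$) — this is where a subadditive function need not automatically satisfy $(\delta)$, so I would instead build $\sigma$ from the start in the variable $t\mapsto e^t$: work with $\varphi_\omega(t)=\omega(e^t)$, which is convex by $(\delta)$ and satisfies $\varphi_\omega(t)=o(e^t)$, take its appropriate majorant there so that $(\delta)$ holds by construction, and transport back; (5) verify $(\alpha)$, $(\gamma)$, continuity, and $\omega=o(\sigma)$.

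\textbf{Main obstacle.} The real difficulty is juggling the \emph{two} requirements simultaneously: $\sigma$ must be subadditive \emph{and} satisfy property $(\delta)$ (convexity of $\sigma\circ\exp$), while still strictly dominating $\omega$. Subadditivity pushes toward concavity of $\sigma$ in $t$, whereas $(\delta)$ is a convexity condition in $\log t$; these are compatible (e.g. $\log(1+t)$, $\log^\beta$, $t^a$ for $0<a<1$ all satisfy both) but the construction has to be done so as not to destroy one while securing the other — I expect to perform the majorization in the $\varphi_\omega$-picture to keep $(\delta)$ free, then separately argue subadditivity of the resulting $\sigma$, and the technical heart is choosing the growth rate of the gap between $\omega$ and $\sigma$ slowly enough (so $(\delta)$ and subadditivity survive) yet fast enough that $\omega(t)/\sigma(t)\to 0$.
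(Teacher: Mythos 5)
Your overall strategy---interpolate a concave, sublinear, increasing function strictly between $\omega$ and the linear weight, then use concavity together with nonnegativity at the origin to obtain subadditivity---is in fact the same as the paper's. The paper, however, discharges the one genuinely hard step by citing Lemma~1.7 and Remark~1.8(1) of \cite{BMT}: applied to $\omega$ and $\omega_0(t)=\max\{0,t-1\}$, that result produces a weight $\lambda$ with $\omega(t)=o(\lambda(t))$ and $\lambda(t)=o(t)$ which \emph{simultaneously} satisfies $(\alpha)$, $(\gamma)$, $(\delta)$, vanishes on $[0,1]$ and is concave on $[1,+\infty)$; the final trick is then to observe that $\lambda(\cdot+1)$ is concave and vanishes at $0$, hence subadditive, so that $\sigma(t):=\lambda(t)+\lambda(2)$ is a subadditive weight with $\omega=o(\sigma)$.

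The gap in your proposal is exactly the point you yourself flag as the ``main obstacle'' and then leave unresolved: producing a single function that is at once concave (hence subadditive), strictly dominates $\omega$, is $o(t)$, \emph{and} satisfies $(\delta)$. Your two suggested devices pull in opposite directions. Taking least concave majorants in the $t$-variable secures subadditivity but can destroy $(\delta)$: a concave increasing $\sigma$ need not have $t\,\sigma'(t)$ increasing (consider a piecewise linear concave function whose slope drops sharply at some point), so $\sigma\circ\exp$ need not be convex. Conversely, taking convex majorants in the $\varphi_\omega$-picture secures $(\delta)$ but gives no control on subadditivity, and ``separately argue subadditivity of the resulting $\sigma$'' is precisely the missing proof, not a proof. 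Likewise your step (3), upgrading $\omega\leq\widehat\omega$ to $\omega=o(\sigma)$, needs an explicit interpolant (e.g.\ $\sqrt{t\,\widehat\omega(t)}$, which is concave and lies strictly between $\widehat\omega$ and $t$), and this again is not obviously compatible with $(\delta)$. In short, the plan is the right one, but the simultaneous construction is the entire content of the lemma and is not carried out; the paper's legitimate shortcut is to quote \cite{BMT} for it, and if you want a self-contained argument you must either reproduce that construction or find another way to reconcile concavity with $(\delta)$.
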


\begin{proof}
Let us consider $\omega_0(t)=\max\{0,t-1\}$. This is a continuous increasing
function $\omega_0:\ [0,+\infty)\to[0,+\infty)$ that satisfies $(\alpha)$, $(\gamma)$
and $(\delta)$ of Definition~\ref{defomega} and moreover $\omega_0|_{[0,1]}\equiv0$ and
$\omega_0$ is concave on $[1+\infty)$.

Then, by Lemma~1.7 and Remark~1.8(1) of \cite{BMT}, there exists a weight function
$\lambda$ satisfying $(\alpha)$, $(\gamma)$ and $(\delta)$ and such that
$\lambda|_{[0,1]}\equiv0$, $\lambda$ concave on $[1,+\infty)$ and
\beqsn
&&\omega(t)=o(\lambda(t)),\qquad\mbox{as}\ t\to+\infty,\\
&&\lambda(t)=o(\omega_0(t))=o(t),\qquad\mbox{as}\ t\to+\infty.
\eeqsn
Since $\lambda(t+1)$ is concave on $[0,+\infty)$ with
$\lambda(1)=0$, we have that 
$\sigma(t):=\lambda(t)+\lambda(2)$ is the required weight function.
%
%
%
%
\end{proof}

\begin{Prop}
\label{Gaussiana}
Let $\varphi_0(x)=e^{- |x|^2}$ be the Gaussian function and let
$\psi_0$ be a dual window of $\varphi_0$. Then $\varphi_0,\psi_0\in\Sch_\omega(\R^d)$
for every weight function $\omega$.
\end{Prop}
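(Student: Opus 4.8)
The plan is to split the statement into two parts: $\varphi_0\in\Sch_\omega(\R^d)$ by a direct computation, and $\psi_0\in\Sch_\omega(\R^d)$ by reducing to the subadditive case via Lemma~\ref{lemmasubad}. For the Gaussian, each $\partial^\alpha\varphi_0$ is a polynomial times $e^{-|x|^2}$, and $\hat\varphi_0$ is a constant multiple of a Gaussian, so its derivatives have the same shape. By property~$(\beta)$ of Definition~\ref{defomega}, $\lambda\omega(t)=o(t)=o(t^2)$ for every $\lambda>0$, so that $e^{\lambda\omega(x)}|x^\beta\partial^\alpha\varphi_0(x)|\to0$ as $|x|\to+\infty$, and likewise for $\hat\varphi_0$; by Definition~\ref{Somega} this gives $\varphi_0\in\Sch_\omega(\R^d)$ for every weight function $\omega$.

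For the dual window, I would invoke Lemma~\ref{lemmasubad} to produce a subadditive weight function $\sigma$ with $\omega(t)=o(\sigma(t))$ as $t\to+\infty$; from its construction $\sigma$ still satisfies~$(\beta)$, so the first step also yields $\varphi_0\in\Sch_\sigma(\R^d)$. Since $\sigma$ is subadditive and the Gabor system $\{\Pi(z)\varphi_0\}_{z\in\alpha_0\Z^d\times\beta_0\Z^d}$ is a frame for $L^2(\R^d)$, Theorem~4.2 of~\cite{GZ} applies to $\Sch_\sigma(\R^d)$ and gives that the canonical dual window $\psi_0$ of $\varphi_0$ belongs to $\Sch_\sigma(\R^d)$. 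It remains to record the inclusion $\Sch_\sigma(\R^d)\subseteq\Sch_\omega(\R^d)$: from $\omega=o(\sigma)$ and the continuity of $\omega$ one gets, for each $\lambda>0$, a constant $C_\lambda>0$ with $\lambda\omega(t)\le\sigma(t)+C_\lambda$ for all $t\ge0$, hence $e^{\lambda\omega(x)}|\partial^\alpha u(x)|\le e^{C_\lambda}e^{\sigma(x)}|\partial^\alpha u(x)|$ together with the analogous estimate for $\hat u$, both bounded when $u\in\Sch_\sigma(\R^d)$. Therefore $\psi_0\in\Sch_\omega(\R^d)$.

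I do not expect a genuine obstacle in the argument itself, since the essential work has already been isolated in Lemma~\ref{lemmasubad}; the only nontrivial external input is Theorem~4.2 of~\cite{GZ} for the subadditive weight $\sigma$, and the remaining facts — that a Gaussian decays faster than $e^{-\lambda\omega}$ for any weight, and that $\Sch_\sigma(\R^d)\hookrightarrow\Sch_\omega(\R^d)$ when $\omega=o(\sigma)$ — are elementary. The one point deserving a little care is that the lattice parameters $\alpha_0,\beta_0$ chosen so that $\varphi_0$ generates a frame for $L^2(\R^d)$ are precisely those with respect to which the canonical dual $\psi_0$ is formed, so that \cite{GZ} is applicable verbatim.
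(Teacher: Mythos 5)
Your proposal is correct and follows essentially the same route as the paper: reduce to a subadditive weight $\sigma$ with $\omega=o(\sigma)$ via Lemma~\ref{lemmasubad}, apply Theorem~4.2 of \cite{GZ} to get $\psi_0\in\Sch_\sigma(\R^d)$, and use the inclusion $\Sch_\sigma(\R^d)\subseteq\Sch_\omega(\R^d)$. The only (immaterial) difference is that you check $\varphi_0\in\Sch_\omega(\R^d)$ by direct computation, whereas the paper deduces it from $\varphi_0\in\Sch_\sigma(\R^d)$ using condition~$(\beta)$ for $\sigma$.
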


\begin{proof}
Let $\omega$ be a weight function as in Definition~\ref{defomega}.
By Lemma~\ref{lemmasubad} there exists a subadditive weight function $\sigma$
such that $\omega(t)=o(\sigma(t))$ as $t\to+\infty$. Then
$\Sch_\sigma(\R^d)\subseteq\Sch_\omega(\R^d)$.

Clearly $\varphi_0\in\Sch_\sigma(\R^d)\subseteq\Sch_\omega(\R^d)$ by
condition $(\beta)$. Since $\sigma$ is subadditive, by \cite[Thm. 4.2]{GZ},
its dual window $\psi_0\in\Sch_\sigma(\R^d)\subseteq\Sch_\omega(\R^d)$ and the proof is complete.
\end{proof}

We fix, once and for all, $\varphi_0(x)=e^{- |x|^2}$,
$\alpha_0,\beta_0>0$ such that $\{\Pi(\sigma)\varphi_0\}_{\sigma\in\alpha_0\Z^d\times\beta_0\Z^d}$ is a Gabor frame for $L^2$ and $\psi_0$ the canonical dual window
of $\varphi_0$ (see \cite[Section~7.3]{G}). For the lattice $\Lambda:=\alpha_0\Z^d\times\beta_0\Z^d$, we consider
the {\em analysis operator} $C_{\varphi_0}$ acting on a function $f\in L^2(\R^d)$
\beqsn
C_{\varphi_0} f:=\langle f,\Pi(\sigma)\varphi_0\rangle,\qquad\sigma\in\Lambda,
\eeqsn
and the {\em synthesis operator} $D_{\psi_0}$ acting on a sequence $c=(c_{k,n})_{k,n\in\Z^d}$
\beqsn
D_{\psi_0} c=\sum_{k,n\in\Z^d}c_{k,n}\Pi(\alpha_0k,\beta_0n)\psi_0.
\eeqsn
It is well known (see, for instance, \cite{G}) that
\beqsn
D_{\psi_0} C_{\varphi_0}=\Id,\qquad\mbox{the identity on}\ L^2(\R^d),
\eeqsn
since $\psi_0$ is the canonical dual window of $\varphi_0$, and then
\beqs
\label{add1}
D_{\psi_0} C_{\varphi_0}=\Id,\qquad\mbox{on}\ \Sch_\omega(\R^d)\subset L^2(\R^d).
\eeqs
Later on we shall explain more precisely this identity on $\Sch_\omega(\R^d)$.

We denote by $\ell^{p,q}_{m_\lambda}$, for $1\leq p,q\leq+\infty$ and $\lambda\in\R\setminus\{0\}$, the space of all
sequences $a=(a_{kn})_{k,n\in\Z^d}$, with $a_{kn}\in\C$ for every $k,n\in\Z^d$, such that
\beqsn
\|a\|_{\ell^{p,q}_{m_\lambda}}:=\left(\sum_{n\in\Z^d}\left(\sum_{k\in\Z^d}
|a_{kn}|^pm_\lambda(k,n)^p\right)^{q/p}\right)^{1/q}<+\infty,
\eeqsn
if $1\leq p,q<+\infty$,
\beqsn
&&\|a\|_{\ell^{\infty,q}_{m_\lambda}}:=\left(\sum_{n\in\Z^d}\left(\sup_{k\in\Z^d}
|a_{kn}|m_\lambda(k,n)\right)^{q}\right)^{1/q}<+\infty,\\
&&\|a\|_{\ell^{p,\infty}_{m_\lambda}}:=\sup_{n\in\Z^d}\left(\sum_{k\in\Z^d}
|a_{kn}|^pm_\lambda(k,n)^p\right)^{1/p}<+\infty,
\eeqsn
for $1\leq p,q\leq+\infty$ with $p=+\infty$ or $q=+\infty$ respectively.

Then we say that a measurable function $F$ on $\R^{2d}$ belongs to the
{\em amalgam space} $W(L^{p,q}_{m_\lambda})$ for the sequence
\beqsn
a_{kn}:=\esssup_{(x,\xi)\in[0,1]^{2d}}|F(k+x,n+\xi)|
=\|F\cdot T_{(k,n)}\chi_Q\|_{L^\infty},
\eeqsn
where $\chi_Q$ is the characteristic function of the cube $Q=[0,1]^{2d}$, when $a=(a_{kn})_{k,n\in\Z^d}\in\ell^{p,q}_{m_\lambda}$.
Equivalently, $F\in W(L^{p,q}_{m_\lambda})$ if and only if
\beqs
\label{5}
|F|\leq\sum_{k,n\in\Z^d}b_{kn}T_{(k,n)}\chi_Q
\eeqs
for some $b=(b_{kn})_{k,n\in\Z^d}\in\ell^{p,q}_{m_\lambda}$ (cf. \cite[pg. 222]{G}).
The amalgam space $W(L^{p,q}_{m_\lambda})$ is endowed with the norm
\beqsn
\|F\|_{W(L^{p,q}_{m_\lambda})}=\|a\|_{\ell^{p,q}_{m_\lambda}}.
\eeqsn

In what follows we shall need the Young estimate for $L^{p,q}_{m_\lambda}$:
\begin{Prop}
\label{prop1113G}
Let $\omega$ be a weight function and  $L$ as in \eqref{L}. Set, for every $\lambda\in\R$,
\beqsn
\mu(\lambda):=\begin{cases}
\lambda L,&\lambda\geq0\cr
\lambda/L,&\lambda<0,
\end{cases}
\qquad
\nu(\lambda):=\begin{cases}
\lambda L,&\lambda\geq0\cr
|\lambda|,&\lambda<0.
\end{cases}
\eeqsn
Then, for $F\in L^{p,q}_{m_{\mu(\lambda)}}$ and $G\in L^1_{m_{\nu(\lambda)}}$,
with $1\leq p,q\leq+\infty$, we have that $F*G\in L^{p,q}_{m_\lambda}$ and
\beqsn
\|F*G\|_{L^{p,q}_{m_\lambda}}\leq C_\lambda\|F\|_{L^{p,q}_{m_{\mu(\lambda)}}}
\|G\|_{L^1_{m_{\nu(\lambda)}}}
\eeqsn
for a constant $C_\lambda>0$ depending on $\lambda$.
\end{Prop}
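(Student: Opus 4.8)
The plan is to reduce the estimate to two independent facts: a pointwise ``moderateness'' inequality for the weight $m_\lambda$, in which inequality \eqref{L} (a consequence of condition $(\alpha)$ of Definition~\ref{defomega}) is the only input, and the classical Young inequality for the \emph{unweighted} mixed-norm spaces $L^{p,q}(\R^{2d})$. Writing the convolution as $(F*G)(z)=\int_{\R^{2d}}F(z-w)\,G(w)\,dw$, the first step is to produce a constant $C_\lambda>0$ such that
\[
m_\lambda(z)\le C_\lambda\,m_{\mu(\lambda)}(z-w)\,m_{\nu(\lambda)}(w),\qquad z,w\in\R^{2d}.
\]
Granted this, one has the pointwise bound $|(F*G)(z)|\,m_\lambda(z)\le C_\lambda\big((|F|\,m_{\mu(\lambda)})*(|G|\,m_{\nu(\lambda)})\big)(z)$, so it suffices to control the $L^{p,q}$-norm of the convolution of the two nonnegative functions $|F|\,m_{\mu(\lambda)}\in L^{p,q}(\R^{2d})$ and $|G|\,m_{\nu(\lambda)}\in L^1(\R^{2d})$.

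For the weight inequality I would argue according to the sign of $\lambda$. If $\lambda\ge0$, the triangle inequality for the Euclidean norm, the monotonicity of $\omega$ and \eqref{L} give $\omega(z)\le L\big(\omega(z-w)+\omega(w)+1\big)$ (here $\omega(-w)=\omega(|w|)=\omega(w)$); multiplying by $\lambda\ge0$ and exponentiating yields
\[
m_\lambda(z)=e^{\lambda\omega(z)}\le e^{\lambda L}\,m_{\lambda L}(z-w)\,m_{\lambda L}(w),
\]
which is the desired inequality with $\mu(\lambda)=\nu(\lambda)=\lambda L$ and $C_\lambda=e^{\lambda L}$. If $\lambda<0$, I would instead apply \eqref{L} ``in the other direction'', to $\omega(z-w)\le L\big(\omega(z)+\omega(w)+1\big)$; this rearranges to $-|\lambda|\,\omega(z)\le-\frac{|\lambda|}{L}\,\omega(z-w)+|\lambda|\,\omega(w)+|\lambda|$, and exponentiating gives
\[
m_\lambda(z)=e^{\lambda\omega(z)}\le e^{|\lambda|}\,m_{\lambda/L}(z-w)\,m_{|\lambda|}(w),
\]
which is the desired inequality with $\mu(\lambda)=\lambda/L$, $\nu(\lambda)=|\lambda|$ and $C_\lambda=e^{|\lambda|}$. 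In both cases the exponents coincide with the definitions of $\mu(\lambda)$ and $\nu(\lambda)$ in the statement.

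For the second fact I would prove that $\|H*K\|_{L^{p,q}}\le\|H\|_{L^{p,q}}\,\|K\|_{L^1}$ for $H\in L^{p,q}(\R^{2d})$, $K\in L^1(\R^{2d})$ and $1\le p,q\le+\infty$. Splitting $z=(x,\xi)$ and $w=(y,\eta)$, one first uses Minkowski's integral inequality in $x$ together with the one-variable Young inequality to bound $\|(H*K)(\cdot,\xi)\|_{L^p_x}$ by $(\phi*_\xi\kappa)(\xi)$, where $\phi(\xi):=\|H(\cdot,\xi)\|_{L^p_x}$ and $\kappa(\xi):=\|K(\cdot,\xi)\|_{L^1_x}$; then one takes the $L^q_\xi$-norm and applies Young's inequality once more, noting $\|\phi\|_{L^q_\xi}=\|H\|_{L^{p,q}}$ and $\|\kappa\|_{L^1_\xi}=\|K\|_{L^1(\R^{2d})}$. (Both Minkowski's integral inequality and Young's inequality remain valid at the endpoints $p=+\infty$ or $q=+\infty$.) Applying this to $H=|F|\,m_{\mu(\lambda)}$ and $K=|G|\,m_{\nu(\lambda)}$ and combining with the pointwise estimate of the previous step gives
\[
\|F*G\|_{L^{p,q}_{m_\lambda}}\le C_\lambda\,\big\|\,|F|\,m_{\mu(\lambda)}\,\big\|_{L^{p,q}}\,\big\|\,|G|\,m_{\nu(\lambda)}\,\big\|_{L^1}=C_\lambda\,\|F\|_{L^{p,q}_{m_{\mu(\lambda)}}}\,\|G\|_{L^1_{m_{\nu(\lambda)}}},
\]
which in particular shows that $F*G$ is finite a.e.\ and lies in $L^{p,q}_{m_\lambda}$. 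I expect the only genuinely delicate point to be the bookkeeping in the case $\lambda<0$, where \eqref{L} must be used in the reverse direction so as to produce exactly the exponents $\lambda/L$ and $|\lambda|$ with a finite constant $C_\lambda$; the unweighted mixed-norm Young inequality is classical and the remaining steps are routine.
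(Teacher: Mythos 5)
Your proposal is correct and follows essentially the same route as the paper's proof: split according to the sign of $\lambda$, use \eqref{L} directly for $\lambda\ge0$ and in the reverse direction (applied to $\omega(z-w)$) for $\lambda<0$ to obtain the pointwise weight inequality with exponents $\mu(\lambda)$, $\nu(\lambda)$, and then reduce to the unweighted Young inequality for $L^{p,q}*L^1$. The only difference is cosmetic: you prove the mixed-norm Young inequality via Minkowski's integral inequality and treat the endpoint cases $p=+\infty$ or $q=+\infty$ uniformly, whereas the paper simply cites the standard inequality and remarks that the endpoint cases are similar.
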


\begin{proof}
Let us first assume $1\leq p,q<+\infty$.
From the definition of convolution
\beqsn
\|F*G\|_{L^{p,q}_{m_\lambda}}\le \left(\int_{\R^d}\left(\int_{\R^d}\left(e^{\lambda\omega(x,\xi)}\int_{\R^{2d}}
|F(x-y,\xi-\eta)G(y,\eta)|dyd\eta \right)^pdx\right)^{q/p}d\xi
\right)^{1/q}.
\eeqsn

Now, for $\lambda\geq0$ we have, by \eqref{L},
\beqsn
\lambda\omega(x,\xi)
\leq\lambda L(\omega(x-y,\xi-\eta)+\omega(y,\eta)+1),
\eeqsn
so that
\beqsn
\|F*G\|_{L^{p,q}_{m_\lambda}}
\le e^{\lambda L}
\left\|\left(|F|e^{\lambda L\omega(\cdot)}\right)*\left(|G|e^{\lambda L\omega(\cdot)}\right)
\right\|_{L^{p,q}}.
\eeqsn
By the standard Young's inequality for (non weighted) $L^{p,q}$ spaces we obtain
\beqsn
\|F*G\|_{L^{p,q}_{m_\lambda}}\leq&&
C_\lambda \||F|e^{\lambda L\omega(\cdot)}\|_{L^{p,q}}
\||G|e^{\lambda L\omega(\cdot)}\|_{L^1}\\
=&&C_\lambda\|F\|_{L^{p,q}_{m_{\lambda L}}}
\|G\|_{L^1_{m_{\lambda L}}}=C_\lambda\|F\|_{L^{p,q}_{m_{\mu(\lambda)}}}
\|G\|_{L^1_{m_{\nu(\lambda)}}}.
\eeqsn

For $\lambda<0$ we have, by \eqref{L},
\beqsn
\lambda\omega(x,\xi)\leq\frac\lambda L\omega(x-y,\xi-\eta)-\lambda\omega(y,\eta)-
\lambda
=\frac\lambda L\omega(x-y,\xi-\eta)+|\lambda|\omega(y,\eta)-
\lambda,
\eeqsn
and then, as before,
\beqsn
\|F*G\|_{L^{p,q}_{m_\lambda}}\leq C_\lambda\|F\|_{L^{p,q}_{m_{\lambda/L}}}
\|G\|_{L^1_{m_{|\lambda|}}}=
C_\lambda\|F\|_{L^{p,q}_{m_{\mu(\lambda)}}}
\|F\|_{L^1_{m_{\nu(\lambda)}}}
\eeqsn
for some $C_\lambda>0$.
The proof for $p=+\infty$ and/or $q=+\infty$ is similar.
\end{proof}

We have the following proposition, analogous to \cite[Prop. 11.1.4]{G}. We give the proof for the convenience of the reader.
\begin{Prop}
\label{prop1114G}
Let $\omega$ be a weight function, $L$ as in \eqref{L} and $\lambda>0$.
If $F\in W(L^{p,q}_{m_{\lambda L}})$ is continuous, then for every $\alpha,\beta>0$ there exists a constant $C_{\alpha,\beta,\lambda}>0$ such that
\beqsn
\left\|F|_{\alpha\Z^d\times\beta\Z^d}\right\|_{\ell^{p,q}_{\tilde m_\lambda}}
\leq C_{\alpha,\beta,\lambda}\|F\|_{W(L^{p,q}_{m_{\lambda L}})},
\eeqsn
for $\tilde m_\lambda(k,n):=m_\lambda(\alpha k,\beta n)$.
\end{Prop}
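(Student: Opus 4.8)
The strategy is the classical sampling argument for amalgam spaces (as in \cite[Prop.~11.1.4]{G}): continuity of $F$ lets us dominate each sampled value $|F(\alpha k,\beta n)|$ by the sequence $a=(a_{jl})_{j,l\in\Z^d}$ that defines $\|F\|_{W(L^{p,q}_{m_{\lambda L}})}$, and then a bounded-overlap counting argument, combined with the weight-transfer inequality \eqref{L}, converts the lattice $\ell^{p,q}$ norm sampled on $\alpha\Z^d\times\beta\Z^d$ into the amalgam norm, which is built over $\Z^{2d}$.

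First I would fix $(k,n)\in\Z^d\times\Z^d$ and let $j=j(k)\in\Z^d$, $l=l(n)\in\Z^d$ be given by $j_i:=\lfloor\alpha k_i\rfloor$ and $l_i:=\lfloor\beta n_i\rfloor$, so that $(\alpha k,\beta n)\in(j,l)+Q$ with $Q=[0,1]^{2d}$. Since $F$ is continuous,
\[
|F(\alpha k,\beta n)|\le\esssup_{(x,\xi)\in Q}\bigl|F\bigl((j,l)+(x,\xi)\bigr)\bigr|=a_{j l}.
\]
Moreover $|(\alpha k,\beta n)-(j,l)|\le\sqrt{2d}$, so by monotonicity of $\omega$ and by \eqref{L},
\[
\omega(\alpha k,\beta n)\le\omega\bigl(|(j,l)|+\sqrt{2d}\bigr)\le L\bigl(\omega(j,l)+\omega(\sqrt{2d})+1\bigr),
\]
whence $\tilde m_\lambda(k,n)=e^{\lambda\omega(\alpha k,\beta n)}\le C'_\lambda\,m_{\lambda L}(j,l)$ with $C'_\lambda:=e^{\lambda L(\omega(\sqrt{2d})+1)}$, a constant depending only on $\lambda$, $L$ and $d$. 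Combining the two estimates and setting $b_{jl}:=a_{jl}\,m_{\lambda L}(j,l)$ (so that $\|b\|_{\ell^{p,q}}=\|F\|_{W(L^{p,q}_{m_{\lambda L}})}$), one gets the pointwise bound
\[
|F(\alpha k,\beta n)|\,\tilde m_\lambda(k,n)\le C'_\lambda\,b_{j(k)\,l(n)}.
\]

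Next comes the counting. For fixed $l\in\Z^d$ the set $\{n\in\Z^d:l(n)=l\}$ has at most $N_\beta:=(\lfloor1/\beta\rfloor+1)^d$ elements, since in each coordinate $\beta n_i\in[l_i,l_i+1)$ confines $n_i$ to an interval of length $1/\beta$; likewise $\#\{k:j(k)=j\}\le N_\alpha:=(\lfloor1/\alpha\rfloor+1)^d$. Inserting the pointwise bound into the definition of $\|\cdot\|_{\ell^{p,q}_{\tilde m_\lambda}}$, the inner sum over $k$ is dominated by $N_\alpha\sum_{j\in\Z^d}b_{j\,l(n)}^p$, and then the outer sum over $n$ by $N_\beta\sum_{l\in\Z^d}\bigl(\sum_j b_{jl}^p\bigr)^{q/p}$; taking $q$-th roots yields
\[
\bigl\|F|_{\alpha\Z^d\times\beta\Z^d}\bigr\|_{\ell^{p,q}_{\tilde m_\lambda}}\le C'_\lambda\,N_\alpha^{1/p}N_\beta^{1/q}\,\|b\|_{\ell^{p,q}}=C_{\alpha,\beta,\lambda}\,\|F\|_{W(L^{p,q}_{m_{\lambda L}})},
\]
which is the assertion; the cases $p=+\infty$ and/or $q=+\infty$ are handled in exactly the same way, replacing the relevant sums by suprema and dropping the corresponding $N_\alpha^{1/p}$ or $N_\beta^{1/q}$ factor.

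The only genuinely delicate point — the ``hard part'' — is keeping the bookkeeping of the mixed $\ell^{p,q}$ norm straight while carrying out the finitely-many-to-one change of summation index $(k,n)\mapsto(j(k),l(n))$: the inner and outer layers of the norm must be regrouped separately, each using the bounded overlap in its own variable, and one must check that the weight-transfer constant $C'_\lambda$ (which is uniform in $k,n$) factors cleanly out of both layers. Everything else — the domination of $|F(\alpha k,\beta n)|$ by $a_{j(k)l(n)}$ from continuity, the passage from $\tilde m_\lambda$ to $m_{\lambda L}$ via \eqref{L}, and the elementary count of lattice points per unit cube — is routine.
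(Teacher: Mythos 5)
Your argument is correct and is essentially the paper's own proof: both dominate each sample $|F(\alpha k,\beta n)|$ by the essential supremum over the unit cube containing it, transfer the weight via \eqref{L} at the cost of a constant depending only on $\lambda$, $L$, $d$, and then use the bounded-overlap count $(\lfloor 1/\alpha\rfloor+1)^d$, $(\lfloor 1/\beta\rfloor+1)^d$ to regroup the mixed $\ell^{p,q}$ sums. The only cosmetic difference is that you name the reference lattice point explicitly via floors, while the paper just writes $(\alpha k,\beta n)\in(r,s)+[0,1]^{2d}$.
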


\begin{proof}
The continuity of $F$ is necessary in order that $F(\alpha k,\beta n)$ is well defined.
For $(\alpha k,\beta n)\in(r,s)+[0,1]^{2d}$ with $(r,s)\in\Z^d\times\Z^d$ we have 
\beqsn
\tilde m_\lambda(k,n)=&&e^{\lambda\omega(\alpha k,\beta n)}
\leq \sup_{(x,\xi)\in[0,1]^{2d}}e^{\lambda L(\omega(r,s)+\omega(x,\xi)+1)}\\
=&&e^{\lambda L}e^{\lambda L\omega(r,s)}\sup_{(x,\xi)\in[0,1]^{2d}}e^{\lambda L\omega(x,\xi)}
=C_\lambda m_{\lambda L}(r,s)
\eeqsn
for $C_\lambda=e^{\lambda L}\sup_{(x,\xi)\in[0,1]^{2d}}e^{\lambda L\omega(x,\xi)}$.
Then
\beqsn
|F(\alpha k,\beta n)|m_\lambda(\alpha k,\beta n)\leq&&
\esssup_{(x,\xi)\in[0,1]^{2d}}|F(r+x,s+\xi)|\cdot
C_\lambda m_{\lambda L}(r,s)\\
\leq&& C_\lambda\|F\cdot T_{(r,s)}\chi_Q\|_{L^\infty}\cdot m_{\lambda L}(r,s).
\eeqsn

Since there are at most $\tilde{C}_\alpha:=\left(\left[\frac1\alpha\right]+1\right)^d$ points
$\alpha k\in r+[0,1]^d$ we obtain
\beqsn
\left(\sum_{k\in\Z^d}|F(\alpha k,\beta n)|^pm_\lambda(\alpha k,\beta n)^p\right)^{1/p}
\!\!\!\!\leq\left(\tilde{C}_\alpha C_\lambda^p\sum_{r\in\Z^d}
\left\|F\cdot T_{(r,s)}\chi_Q\right\|_{L^\infty}^pm_{\lambda L}(r,s)^p\right)^{1/p}\!\!\!\!.
\eeqsn

Analogously, there are at most $\tilde{C}_\beta:=\left(\left[\frac 1\beta\right]+1\right)^d$ points
$\beta n\in s+[0,1]^d$ and therefore
\beqsn
\left\|F|_{\alpha\Z^d\times\beta\Z^d}\right\|_{\ell^{p,q}_{\tilde m_\lambda}}
\leq&&\left(\sum_{s\in\Z^d}\tilde{C}_\beta
\left(\tilde{C}_\alpha C_\lambda^p\sum_{r\in\Z^d}
\left\|F\cdot T_{(r,s)}\chi_Q\right\|_{L^\infty}^pm_{\lambda L}(r,s)^p\right)^{q/p}\right)^{1/q}\\
\leq&& \tilde{C}_\beta^{1/q}\tilde{C}_\alpha^{1/p}C_\lambda\| F\|_{W(L^{p,q}_{m_{\lambda L}})}.
\eeqsn
\end{proof}

\begin{Prop}
\label{propAle}
Let $\omega$ be a weight function, $L$ as in \eqref{L} and $\lambda>0$.
If $F\in L^{p,q}_{m_{\lambda L}}$ and $G\in L^1_{m_{\lambda L^2}}$, then
$F*G\in W(L^{\infty}_{m_\lambda})$ and
\beqsn
\|F*G\|_{W(L^{\infty}_{m_\lambda})}\leq C_\lambda
\|F\|_{L^\infty_{m_{\lambda L}}}\|G\|_{L^1_{m_{\lambda L^2}}}.
\eeqsn
\end{Prop}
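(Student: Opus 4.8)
The plan is to mimic the proof of \cite[Prop.~11.1.4]{G}, adapting it to the weighted setting with the aid of the Young estimate from Proposition~\ref{prop1113G}. First I would recall that $F*G\in W(L^\infty_{m_\lambda})$ means, by \eqref{5}, that there is a sequence $b=(b_{kn})_{k,n\in\Z^d}\in\ell^\infty_{m_\lambda}$ with $|F*G|\leq\sum_{k,n}b_{kn}T_{(k,n)}\chi_Q$, and then $\|F*G\|_{W(L^\infty_{m_\lambda})}=\|b\|_{\ell^\infty_{m_\lambda}}$. So the task reduces to producing a pointwise envelope for $|F*G|$ over each unit cube $(k,n)+[0,1]^{2d}$ and estimating the resulting supremum.

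The key step is the standard trick of majorizing the local sup of a convolution by a fixed convolution: one checks the inclusion $|F*G|\cdot T_{(k,n)}\chi_Q\leq(|F|*\tilde G)\cdot T_{(k,n)}\chi_Q$ pointwise after enlarging $G$, more precisely one uses that $\esssup_{(x,\xi)\in[0,1]^{2d}}|(F*G)((k,n)+(x,\xi))|\leq(|F|*G^\sharp)(k,n)$ where $G^\sharp$ is an $L^1$ dilation/translation-stable majorant of $|G|$ obtained by summing the local sups $\|G\cdot T_{(j,m)}\chi_Q\|_\infty T_{(j,m)}\chi_{3Q}$ (the $3Q$ appears so that the two cubes over which one takes sups overlap correctly). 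One has $\|G^\sharp\|_{L^1_{m_{\lambda L}}}\leq C_\lambda\|G\|_{W(L^1_{m_{\lambda L}})}$, and since $W(L^1_{m_{\lambda L}})\hookrightarrow L^1_{m_{\lambda L^2}}$ is false in general, I would instead go the other way: bound $\|G^\sharp\|_{L^1_{m_{\lambda L}}}$ directly by $C_\lambda\|G\|_{L^1_{m_{\lambda L^2}}}$, using \eqref{L} to absorb the passage from the cube's corner weight to the cube's sup weight (this costs one factor of $L$, turning $\lambda L$ into $\lambda L^2$, exactly as in the proof of Proposition~\ref{prop1114G}). Then Proposition~\ref{prop1113G} with $p=q=\infty$ applied to $|F|\in L^\infty_{m_{\mu(\lambda)}}$ and $G^\sharp\in L^1_{m_{\nu(\lambda)}}$ — noting $\mu(\lambda)=\nu(\lambda)=\lambda L$ for $\lambda>0$ — yields $|F|*G^\sharp\in L^\infty_{m_\lambda}$ with $\||F|*G^\sharp\|_{L^\infty_{m_\lambda}}\leq C_\lambda\|F\|_{L^\infty_{m_{\lambda L}}}\|G^\sharp\|_{L^1_{m_{\lambda L}}}$.

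Putting the pieces together: taking $b_{kn}:=\esssup_{(x,\xi)\in[0,1]^{2d}}|(F*G)((k,n)+(x,\xi))|$ gives, by the envelope inequality, $b_{kn}\leq(|F|*G^\sharp)(k,n)$ up to a constant, and then $\sup_{k,n}b_{kn}m_\lambda(k,n)\leq C_\lambda\||F|*G^\sharp\|_{L^\infty_{m_\lambda}}\leq C_\lambda\|F\|_{L^\infty_{m_{\lambda L}}}\|G\|_{L^1_{m_{\lambda L^2}}}$, which is the claimed amalgam bound. (The statement's hypothesis names $F\in L^{p,q}_{m_{\lambda L}}$ but the conclusion only involves the $L^\infty$ norm of $F$, so I would work throughout with $p=q=\infty$.)

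The main obstacle I anticipate is bookkeeping the weight losses cleanly: one must verify that the single application of subadditivity \eqref{L} needed to pass from corner-weights to cube-sup-weights accounts for precisely the jump from $\lambda L$ to $\lambda L^2$ in the hypothesis on $G$, and separately that Proposition~\ref{prop1113G} costs the jump from $\lambda$ to $\lambda L$ in the hypothesis on $F$ — these two must not be conflated. A secondary, purely technical point is making the local-sup envelope $G^\sharp$ honest as an $L^1$ function with the right support overlap ($3Q$ rather than $Q$), so that $(|F|*G^\sharp)(k,n)$ genuinely dominates the essential supremum of $|F*G|$ over $(k,n)+Q$ and not merely its value at the corner; this is routine but needs the correct choice of enlarged cube.
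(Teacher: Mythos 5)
There is a genuine gap, and it sits exactly where you placed your ``secondary, purely technical point.'' Your envelope puts the local suprema on $G$: you define $G^\sharp=\sum_{j,m}\|G\cdot T_{(j,m)}\chi_Q\|_{L^\infty}\,T_{(j,m)}\chi_{3Q}$ and then claim $\|G^\sharp\|_{L^1_{m_{\lambda L}}}\leq C_\lambda\|G\|_{L^1_{m_{\lambda L^2}}}$. But $\|G^\sharp\|_{L^1_{m_{\lambda L}}}$ is (up to constants) the amalgam norm $\sum_{j,m}\|G\cdot T_{(j,m)}\chi_Q\|_{L^\infty}\,m_{\lambda L}(j,m)$, i.e.\ a $W(L^\infty,\ell^1)$-type quantity, and this is \emph{not} controlled by any weighted $L^1$ norm of $G$: a function that is merely in $L^1_{m_{\lambda L^2}}$ can have infinite essential supremum on a single unit cube (an integrable local singularity), in which case $G^\sharp\equiv+\infty$ on that cube and the whole construction collapses. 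Adjusting the weight from $\lambda L$ to $\lambda L^2$ via \eqref{L} fixes the corner-versus-cube discrepancy of the \emph{weight}, but it cannot convert a local integral of $|G|$ into a local supremum of $|G|$. If you insist on a cube decomposition, the local sup must go on the factor that lives in $L^\infty$, namely $F$, and the local integral on $G$; as written, the roles are reversed and the step fails.

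The deeper point is that none of this machinery is needed, because the target space is $W(L^\infty_{m_\lambda})=W(L^\infty,\ell^\infty_{m_\lambda})$: the conclusion only asks that the local sups of $F*G$, weighted at the lattice points, be \emph{uniformly bounded}, not summable. The paper's proof is therefore direct: write
$\|F*G\|_{W(L^\infty_{m_\lambda})}=\sup_{k,n}\esssup_{(x,\xi)\in Q}|(F*G)((k,n)+(x,\xi))|e^{\lambda\omega(k,n)}$, use \eqref{L} twice to get
$\omega(k,n)\leq L\,\omega(x+k-y,\xi+n-\eta)+L^2\omega(y,\eta)+L^2\omega(x,\xi)+L^2+L$, absorb the bounded factor $e^{\lambda L^2\omega(x,\xi)}$ for $(x,\xi)\in Q$, and bound the result by $\|(e^{\lambda L\omega}|F|)*(e^{\lambda L^2\omega}|G|)\|_{L^\infty}$; the plain unweighted Young inequality $L^\infty*L^1\subseteq L^\infty$ then gives $C_\lambda\|F\|_{L^\infty_{m_{\lambda L}}}\|G\|_{L^1_{m_{\lambda L^2}}}$. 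This also explains the weight bookkeeping you were worried about: the jump $\lambda\to\lambda L$ on $F$ and $\lambda\to\lambda L^2$ on $G$ both come from the single two-fold application of \eqref{L} above, not from Proposition~\ref{prop1113G} (which is never needed here, since after distributing the weights one is reduced to the classical Young inequality).
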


\begin{proof}
From the definition of the norm in $W(L^{\infty}_{m_\lambda})$ we have
\beqsn
&&\|F*G\|_{W(L^\infty_{m_\lambda})}\\
=&&\sup_{k,n\in\Z^d}\left\{\left[\esssup_{(x,\xi)\in[0,1]^{2d}}\left|
\int_{\R^{2d}}F(x+k-y,\xi+n-\eta)G(y,\eta)dyd\eta\right|\right]e^{\lambda\omega(k,n)}
\right\}.
\eeqsn

By \eqref{L}, it is easy to see that 
\beqsn
\omega(k,n)\leq
L\omega(x+k-y,\xi+n-\eta)+L^2\omega(x,\xi)+L^2\omega(y,\eta)+L^2+L.
\eeqsn

Therefore, we obtain
\beqsn
\|F*G\|_{W(L^{\infty}_{m_\lambda})}
\leq e^{\lambda(L^2+L)}
\sup_{k,n\in\Z^d}\bigg\{\esssup_{(x,\xi)\in[0,1]^{2d}}\bigg|\int_{\R^{2d}}&&
e^{\lambda L\omega(x+k-y,\xi+n-\eta)}|F(x+k-y,\xi+n-\eta)|\\
&&\cdot\,
e^{\lambda L^2\omega(y,\eta)}|G(y,\eta)|dyd\eta\bigg|e^{\lambda L^2\omega(x,\xi)}
\bigg\}.
\eeqsn

Since $(x,\xi)\in[0,1]^{2d}$ we have that $e^{\lambda L^2\omega(x,\xi)}$ is bounded by
a constant depending on $\lambda$ (and $L$), so we obtain 
\beqsn
\|F*G\|_{W(L^{\infty}_{m_\lambda})}\leq&&
C_\lambda
\sup_{k,n\in\Z^d}\bigg\{\esssup_{(x,\xi)\in[0,1]^{2d}}
\bigg|\bigg(e^{\lambda L\omega(\cdot,\cdot)}|F(\cdot,\cdot)|\bigg)*
\bigg(e^{\lambda L^2\omega(\cdot,\cdot)}|G(\cdot,\cdot)|\bigg)(x+k,\xi+n)\bigg|\bigg\}\\
=&&C_\lambda\left\|\left(e^{\lambda L\omega} |F|\right)*\left(e^{\lambda L^2\omega} |G|\right)
\right\|_{L^\infty(\R^{2d})},
\eeqsn
for some $C_\lambda>0$.

By Young's inequality we finally deduce
\beqsn
\|F*G\|_{W(L^{\infty}_{m_\lambda})}\leq
C_\lambda\|e^{\lambda L\omega} F\|_{L^\infty}\|e^{\lambda L^2\omega}
G\|_{L^1}
=C_\lambda\|F\|_{L^{\infty}_{m_{\lambda L}}}\|G\|_{L^{1}_{m_{\lambda L^2}}}.
\eeqsn
\end{proof}

Now, our aim is  to show that there is an isomorphism between $\Sch_\omega(\R^d)$ and its
image through the analysis operator $C_{\varphi_0}$:
\beqs
\label{iso}
C_{\varphi_0}:\ \Sch_\omega(\R^d)\longrightarrow
 \Im C_{\varphi_0}\subseteq\tilde\Lambda_\omega,
\eeqs
where $\tilde\Lambda_\omega$ is defined  in \eqref{Lambdaomega}.

The following proposition holds for every window function $\varphi\in\Sch_\omega(\R^d)\setminus\{0\}$ and in particular for our fixed window $\varphi_0\in\Sch_\omega(\R^d)$:
\begin{Prop}
\label{prop1Ale}
Let $\omega$ be a weight function and $\varphi\in\Sch_{\omega}(\R^{d})\setminus \{0\}$.
The analysis operator
\beqsn
C_\varphi:\ \Sch_\omega(\R^d)\longrightarrow\tilde\Lambda_\omega
\eeqsn
is continuous.
\end{Prop}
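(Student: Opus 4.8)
The goal is to show that, for any window $\varphi\in\Sch_\omega(\R^d)\setminus\{0\}$, the map $u\mapsto C_\varphi u=(V_\varphi u(\sigma))_{\sigma\in\Lambda}$ takes values in $\tilde\Lambda_\omega$ and is continuous, i.e.\ for every $k\in\N$ there are $j\in\N$ and a constant $C>0$ controlling $\|C_\varphi u\|_k$ by a seminorm of $u$ in $\Sch_\omega(\R^d)$. The natural strategy is: (i) recall from Theorem~\ref{propSomega}\,$(h)$ that $u\in\Sch_\omega(\R^d)$ implies $\|V_\varphi u\cdot e^{\mu\omega(z)}\|_{L^{p,q}}<+\infty$ for all $\mu>0$; (ii) upgrade this from an $L^{p,q}$ bound to an amalgam-space bound $V_\varphi u\in W(L^\infty_{m_\mu})$ using the smoothing/convolution estimates already established (Proposition~\ref{propAle}); (iii) sample on the lattice $\Lambda=\alpha_0\Z^d\times\beta_0\Z^d$ using Proposition~\ref{prop1114G} to pass from $W(L^\infty_{m_{\mu}})$ to $\ell^\infty_{\tilde m_\lambda}$; and (iv) read off that $\sup_{\sigma\in\Lambda}|V_\varphi u(\sigma)|e^{k\omega(\sigma)}<+\infty$ for every $k$, which is exactly membership in $\tilde\Lambda_\omega$, tracking constants to get continuity.

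In more detail, the first key step is the well-known pointwise identity $V_\varphi u = (V_{\varphi_0} u) *_{\natural} (V_\varphi \varphi_0)$-type convolution relation (the STFT ``reproducing'' formula), or more simply the bound $|V_\varphi u| \le \frac{1}{\langle\varphi_0,\varphi_0\rangle}\,|V_{\varphi_0}u| * |V_{\varphi_0}\varphi|$ up to a twist; since only absolute values and the weight $m_\lambda$ (which depends on $z$ through $\omega(|z|)$ and is submultiplicative-up-to-constants by \eqref{L}) enter, the twist is harmless. Here $V_{\varphi_0}u\in L^\infty_{m_{\mu'}}$ for every $\mu'$ by Theorem~\ref{propSomega}\,$(h)$ with $p=q=\infty$, and $V_{\varphi_0}\varphi\in L^1_{m_{\mu'}}$ for every $\mu'$ because $\varphi\in\Sch_\omega(\R^d)$ so $V_{\varphi_0}\varphi\in\Sch_\omega(\R^{2d})$ has super-polynomial decay. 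Then Proposition~\ref{propAle} applies with $F=V_{\varphi_0}u$ and $G=V_{\varphi_0}\varphi$: choosing $\mu=\lambda$, $F\in L^\infty_{m_{\lambda L}}$ and $G\in L^1_{m_{\lambda L^2}}$ (both automatic since $u,\varphi\in\Sch_\omega$), we get $V_\varphi u\in W(L^\infty_{m_\lambda})$ with
\[
\|V_\varphi u\|_{W(L^\infty_{m_\lambda})}\le C_\lambda\|V_{\varphi_0}u\|_{L^\infty_{m_{\lambda L}}}\|V_{\varphi_0}\varphi\|_{L^1_{m_{\lambda L^2}}}.
\]
The continuity of $u\mapsto V_{\varphi_0}u$ into $L^\infty_{m_{\lambda L}}$ (which is just a seminorm estimate in $\Sch_\omega(\R^d)$, again via Theorem~\ref{propSomega}\,$(h)$) gives the desired control.

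For the final step, given $k\in\N$, apply Proposition~\ref{prop1114G} with $p=q=\infty$, $\alpha=\alpha_0$, $\beta=\beta_0$, and $\lambda=k$: since $V_\varphi u\in W(L^\infty_{m_{kL}})$ is continuous, we obtain
\[
\|(V_\varphi u(\sigma))_{\sigma\in\Lambda}\|_{\ell^\infty_{\tilde m_k}}=\sup_{\sigma\in\Lambda}|V_\varphi u(\sigma)|e^{k\omega(\sigma)}\le C_{\alpha_0,\beta_0,k}\,\|V_\varphi u\|_{W(L^\infty_{m_{kL}})}<+\infty,
\]
which is precisely $\|C_\varphi u\|_k<+\infty$; chaining the estimates shows $\|C_\varphi u\|_k$ is bounded by a constant times a seminorm of $u$, so $C_\varphi$ maps $\Sch_\omega(\R^d)$ continuously into $\tilde\Lambda_\omega$. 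The main obstacle is step (ii): we must be careful that the weight functions appearing in the three invocations (Theorem~\ref{propSomega}\,$(h)$, Proposition~\ref{propAle}, Proposition~\ref{prop1114G}) compose correctly — each introduces a loss of a factor $L$ or $L^2$ in the weight parameter, so one has to start with a large enough $\mu$ upstream to land at exactly the exponent $k$ downstream — and that the STFT reproducing formula is valid at the level of $\Sch'_\omega$ distributions with the integrals absolutely convergent, which is standard given the decay of $V_{\varphi_0}\varphi$. Everything else is bookkeeping of constants.
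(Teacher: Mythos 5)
Your proposal is correct and follows essentially the same route as the paper: dominate $|V_\varphi u|$ by a convolution of STFTs via the change-of-window inequality, apply Proposition~\ref{propAle} to land in the amalgam space $W(L^\infty_{m_\lambda})$, sample on the lattice with Proposition~\ref{prop1114G}, and close the continuity estimate with Theorem~\ref{propSomega}\,$(h)$. The only cosmetic difference is that the paper convolves $|V_\varphi u|$ with $|V_\varphi\varphi|$ (same window on both factors) rather than routing through the Gaussian $\varphi_0$, which changes nothing in the bookkeeping.
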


\begin{proof}
It is known that if
$f\in\Sch_\omega(\R^d)$
then for every $\lambda>0$ there exists $C_\lambda>0$ such that
\beqsn
|V_\varphi f(z)|\leq C_\lambda e^{-\lambda\omega(z)},
\qquad z\in\R^{2d}.
\eeqsn
In fact, this property is proved in \cite{GZ} when $\omega$ is subadditive, but it is still true in the
general case (Theorem~\ref{propSomega}).
Since $C_\varphi f=(V_\varphi f(\sigma))_{\sigma\in\Lambda}$ we  have
$C_\varphi f\in\tilde\Lambda_\omega$.

Now, we prove that the operator $C_{\varphi}$ is continuous.  By \cite[Lemma 11.3.3]{G}
\beqsn
|V_\varphi f(z)|\leq\frac{1}{(2\pi)^d\|\varphi\|_{L^2}^2}(|V_\varphi f|*|V_\varphi \varphi|)(z),
\qquad\forall z\in\R^{2d}.
\eeqsn
By Propositions~\ref{prop1114G} and \ref{propAle}, for every fixed $\lambda>0$ we
obtain
\beqsn
\sup_{\sigma\in\Lambda}|V_\varphi f(\sigma)|e^{\lambda\omega(\sigma)}
=&&\|V_\varphi f|_{\alpha_0\Z^d\times\beta_0\Z^d}\|_{\ell^\infty_{\tilde{m}_\lambda}}
\leq C_\lambda\|V_\varphi f\|_{W(L^{\infty}_{m_{\lambda L}})}\\
\leq&&C'_\lambda\|V_\varphi f\|_{L^\infty_{m_{\lambda L^2}}}\|V_\varphi\varphi\|_{L^1_{m_{\lambda L^3}}}
\eeqsn
for $\tilde{m}_\lambda (k,n)=m_\lambda(\alpha_0k,\beta_0n)$ and for
some $C_\lambda,C'_\lambda>0$ ($\alpha_0$ and $\beta_0$ are fixed).
Observe that, since $f,\varphi\in\Sch_\omega(\R^d)$, then $V_\varphi f\in
L^\infty_{m_{\lambda L^2}}$ and $V_\varphi\varphi\in L^1_{m_{\lambda L^3}}$ for every
$\lambda>0$ by Theorem~\ref{propSomega}(h).

Therefore, for every fixed $\lambda>0$ there exists a constant
$C''_\lambda=C'_\lambda\|V_\varphi\varphi\|_{L^1_{m_{\lambda L^3}}}>0$
such that
\beqsn
\sup_{\sigma\in\Lambda}|V_\varphi f(\sigma)|e^{\lambda\omega(\sigma)}
\leq C''_\lambda\|V_\varphi f\|_{L^\infty_{m_{\lambda L^2}}}.
\eeqsn
This gives the continuity by Theorem~\ref{propSomega}(h).
\end{proof}

The following proposition is valid for any $\psi\in\Sch_\omega(\R^d)\setminus\{0\}$.
\begin{Prop}
\label{prop2Ale}
Let $\omega$ be a weight function and $\psi\in\Sch_\omega(\R^d)\setminus\{0\}$.
Then the synthesis operator
\beqsn
D_\psi:\ \tilde\Lambda_\omega\longrightarrow\Sch_\omega(\R^d)
\eeqsn
 is continuous.
\end{Prop}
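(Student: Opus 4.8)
The plan is to estimate the short–time Fourier transform of $D_\psi c$ and to conclude by the STFT description of $\Sch_\omega(\R^d)$ given in Theorem~\ref{propSomega}(h), exactly in the spirit of the proof of Proposition~\ref{prop1Ale}. First I would fix a window $g\in\Sch_\omega(\R^d)\setminus\{0\}$ (for instance $g=\varphi_0$) and record the covariance identity $|V_g(\Pi(\sigma)\psi)(z)|=|V_g\psi(z-\sigma)|$, valid for all $\sigma,z\in\R^{2d}$ (a direct computation; covariance of the STFT under time–frequency shifts, cf.\ \cite{G}). Since $\Pi(\sigma)$ is linear, this yields, for $c=(c_\sigma)_{\sigma\in\Lambda}\in\tilde\Lambda_\omega$,
\beqsn
|V_g(D_\psi c)(z)|\leq\sum_{\sigma\in\Lambda}|c_\sigma|\,|V_g\psi(z-\sigma)|,\qquad z\in\R^{2d},
\eeqsn
at first only formally; the heart of the proof is to bound this sum in the weighted sup norm.

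The key estimate I would establish is: for every $\lambda>0$ there exist $k=k(\lambda)\in\N$ and $C'_\lambda>0$ such that
\begin{equation}\label{keyestProp2}
\sup_{z\in\R^{2d}}|V_g(D_\psi c)(z)|\,e^{\lambda\omega(z)}\leq C'_\lambda\,\|c\|_{k(\lambda)},\qquad c\in\tilde\Lambda_\omega .
\end{equation}
To prove \eqref{keyestProp2} I would split $e^{\lambda\omega(z)}\leq e^{\lambda L}e^{\lambda L\omega(\sigma)}e^{\lambda L\omega(z-\sigma)}$ by means of \eqref{L}, absorb the factor $e^{\lambda L\omega(z-\sigma)}$ into the rapid decay of $V_g\psi$ (note that $V_g\psi\in\Sch_\omega(\R^{2d})$ since $g,\psi\in\Sch_\omega(\R^d)$, so $\sup_w|V_g\psi(w)|e^{\lambda L\omega(w)}<+\infty$ by Theorem~\ref{propSomega}), and bound $|c_\sigma|e^{\lambda L\omega(\sigma)}\leq\|c\|_k\,e^{-(k-\lambda L)\omega(\sigma)}$ straight from the definition of $\tilde\Lambda_\omega$; the remaining lattice sum $\sum_{\sigma\in\Lambda}e^{-(k-\lambda L)\omega(\sigma)}$ converges as soon as $k$ is chosen with $b(k-\lambda L)>2d$, thanks to the lower bound $\omega(t)\geq a+b\log(1+t)$ of condition $(\gamma)$.

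Finally I would harvest \eqref{keyestProp2}. Running the same estimate on the truncated sums $\sum_{M<|\sigma|\leq N}c_\sigma\Pi(\sigma)\psi$ replaces the right–hand side by $e^{\lambda L}C_\lambda\|c\|_{k(\lambda)}\sum_{|\sigma|>M}e^{-(k(\lambda)-\lambda L)\omega(\sigma)}$, a tail of a convergent series, hence $\to0$ as $M\to\infty$ for each $\lambda>0$; since the seminorms $u\mapsto\sup_z|V_gu(z)|e^{\lambda\omega(z)}$, $\lambda>0$, form a fundamental system for the topology of $\Sch_\omega(\R^d)$ (Theorem~\ref{propSomega}(h)) and $\Sch_\omega(\R^d)$ is complete, the series $\sum_{\sigma\in\Lambda}c_\sigma\Pi(\sigma)\psi$ converges in $\Sch_\omega(\R^d)$, so $D_\psi c\in\Sch_\omega(\R^d)$ and \eqref{keyestProp2} holds for the genuine sum. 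As $\lambda>0$ is arbitrary and $\{\|\cdot\|_k\}_{k\in\N}$ is a fundamental system of seminorms for $\tilde\Lambda_\omega$, \eqref{keyestProp2} is precisely the continuity of $D_\psi:\tilde\Lambda_\omega\to\Sch_\omega(\R^d)$.

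The main obstacle is the chain of inequalities behind \eqref{keyestProp2}: one has to dovetail the quasi–subadditivity \eqref{L} (coming from $(\alpha)$), the uniform-in-$z$ absorption of the translated weight $e^{\lambda L\omega(z-\sigma)}$ into the decay of $V_g\psi$, and the logarithmic lower bound $(\gamma)$ that makes the lattice sum summable — leaving a little room to raise $\lambda$. Everything else (convergence of the series, the passage to the topology of $\Sch_\omega(\R^d)$) is routine bookkeeping. An alternative, slightly longer, route avoids the STFT and estimates $\sup_x e^{\lambda\omega(x)}|\partial^\alpha(D_\psi c)(x)|$ and its Fourier counterpart directly; the polynomial factors in $n$ produced by differentiating the modulations in $\Pi(\alpha_0k,\beta_0n)\psi$ are then again absorbed by $e^{-(k-\lambda L)\omega(\sigma)}$ via $(\gamma)$.
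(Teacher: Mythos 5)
Your argument is correct, but it follows a genuinely different route from the paper's. The paper never passes through the STFT of $D_\psi c$: it works directly with the series $\sum_{k,n}c_{kn}e^{i\langle\beta_0 n,t\rangle}\psi(t-\alpha_0 k)$, first proving term-by-term that it may be differentiated and that $D_\psi c\in\Sch(\R^d)$, and then verifying condition (c) of Theorem~\ref{propSomega} with $p=+\infty$, i.e.\ the two weighted sup-bounds \eqref{A5} and \eqref{A6} on $D_\psi c$ and on $\widehat{D_\psi c}$ (the latter via dominated convergence to exchange sum and Fourier integral). The decay that makes the lattice sums converge is extracted exactly as in your key estimate --- splitting $\omega$ with \eqref{L}, transferring part of the decay at the lattice point to decay in $t$, and invoking $(\gamma)$ --- and continuity is obtained by rerunning the same estimates with $\sup_{k,n}|c_{kn}|e^{\lambda\omega(\alpha_0k,\beta_0n)}$ in place of the constant $C_\lambda$. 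Your route instead uses the covariance $|V_g(\Pi(\sigma)\psi)(z)|=|V_g\psi(z-\sigma)|$ together with characterization (h) of Theorem~\ref{propSomega}, which makes the proof noticeably shorter and symmetric in time and frequency (one estimate instead of the separate treatments of $D_\psi c$ and $\widehat{D_\psi c}$), at the price of relying on (h) giving an equivalent \emph{system of seminorms} (not merely a membership criterion) and on the completeness of $\Sch_\omega(\R^d)$ to identify the limit of the partial sums with $D_\psi c$; both points are legitimate and consistent with how the paper itself uses (h) in the proof of Proposition~\ref{prop1Ale}. The paper's direct computation is more self-contained and also yields explicitly the uniform convergence of the differentiated series, which it reuses; your argument gets membership and continuity in one stroke but would need a word more if one wanted those intermediate pointwise facts.
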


\begin{proof}
Let $c=(c_{\sigma})_{\sigma\in\Lambda}\in\tilde\Lambda_\omega$. For simplicity, we denote $c_{\sigma}$ by $c_{kn}$ for $\sigma=(\alpha_{0} k,\beta_{0}n)$. We start proving that
$D_\psi c\in\Sch_\omega(\R^d)$.
We shall apply Theorem~\ref{propSomega}(c) with $p=+\infty$.
So, first, we have to see that $D_\psi c\in\Sch(\R^d)$.

By definition
\beqs
\label{A0}
(D_\psi c)(t)=\sum_{k,n\in\Z^d}c_{kn}M_{\beta_0n}T_{\alpha_0 k}\psi(t)
=\sum_{k,n\in\Z^d}c_{kn}e^{i\langle\beta_0n,t\rangle}\psi(t-\alpha_0 k).
\eeqs
Now, we see that $D_\psi c\in C^\infty(\R^d)$. To that aim we show that for each $\gamma\in\N_{0}^{d}$, the series
\beqs
\label{A1}
\sum_{k,n\in\Z^d}\partial_t^\gamma\left[c_{kn}e^{i\langle\beta_0n,t\rangle}\psi(t-\alpha_0k)\right]
\eeqs
is uniformly convergent on $t\in\R^d$.
Let us compute
\beqs
\nonumber
\partial_t^\gamma\left[c_{kn}e^{i\langle\beta_0n,t\rangle}\psi(t-\alpha_0k)\right]&=&\sum_{\mu\leq\gamma}\binom\gamma\mu c_{kn}\partial_t^\mu\left(e^{i\langle\beta_0n,t\rangle}
\right)
\partial_t^{\gamma-\mu}\psi(t-\alpha_0k)\\
\label{A2}
&=&c_{kn}\sum_{\mu\leq\gamma}\binom\gamma\mu(i\beta_0n)^\mu
e^{i\langle\beta_0n,t\rangle}
\partial_t^{\gamma-\mu}\psi(t-\alpha_0k).
\eeqs

Since $(c_{kn})_{k,n\in\Z^d}\in\tilde\Lambda_\omega$, for every $\lambda>0$ there
exists $C_\lambda>0$ such that
\beqsn
|c_{kn}|\leq C_\lambda e^{-\lambda\omega(\alpha_0k,\beta_0n)},\qquad k,n\in\Z^d.
\eeqsn
Now, since $\omega$ is increasing it is obvious that
$\omega(t,s)\geq\frac12(\omega(t)+\omega(s)).$
%
Therefore
\beqs
\label{A2''}
|c_{kn}|\leq&&C_\lambda e^{-\lambda\omega(\alpha_0k,\beta_0n)}
\leq C_\lambda e^{-\frac{\lambda}{2 }\omega(\alpha_0 k)}e^{-\frac{\lambda}{2 }
\omega(\beta_0n)}.
\eeqs
Since
\beqsn
\omega(t)\leq L(\omega(\alpha_0k-t)+\omega(\alpha_0k)+1),
\eeqsn
we obtain
\beqs
\nonumber
|c_{kn}|\leq&&C_\lambda e^{-\frac{\lambda}{2 }\omega(\beta_0n) }e^{-\frac{\lambda}{4}
\omega(\alpha_0k)}e^{-\frac{\lambda}{4}
\omega(\alpha_0k)}\\
\label{A2'}
\leq&&C_\lambda e^{-\frac{\lambda}{2 }\omega(\beta_0n)} e^{-\frac{\lambda}{4}
\omega(\alpha_0k)}
e^{-\frac{\lambda}{4}\left[\frac1L\omega(t)-\omega(\alpha_0k-t)-1\right]}\\
\nonumber
\leq&&C_\lambda e^{-\frac{\lambda}{4L}\omega(0)}e^{\frac{\lambda}{4}}
e^{-\frac{\lambda}{2 }\omega(\beta_0n)}e^{-\frac{\lambda}{4}\omega(\alpha_0k)}
e^{\frac{\lambda}{4}\omega(\alpha_0k-t)}\\
\nonumber
=&&C'_\lambda e^{-\frac{\lambda}{2 }\omega(\beta_0n)}
e^{-\frac{\lambda}{4}\omega(\alpha_0k)}
e^{\frac{\lambda}{4}\omega(\alpha_0k-t)}.
\eeqs

Then we have, by \eqref{A2}, for $C_{\lambda,\gamma}=C'_\lambda\max_{\mu\leq\gamma}
|\beta_0|^{|\mu|-|\gamma|}$, since $\psi\in\Sch_\omega(\R^d)$ 
(see Definition~\ref{Somega}),
\beqs
\nonumber
\lefteqn{\left|\partial_t^\gamma\left[c_{kn}e^{i\langle\beta_0n,t\rangle}\psi(t-\alpha_0k)\right]\right|
\leq|c_{kn}|\sum_{\mu\leq\gamma}\binom{\gamma}{\mu}|\beta_0n|^{|\mu|}
|\partial_t^{\gamma-\mu}\psi(t-\alpha_0k)|}\\
\nonumber
&&\quad =|c_{kn}|\sum_{\mu\leq\gamma}\binom{\gamma}{\mu}|\beta_0|^{|\mu|-|\gamma|}
|\beta_0|^{|\gamma|}n^{|\mu|}
|\partial_t^{\gamma-\mu}\psi(t-\alpha_0k)|\\
\label{A3}
&&\quad \leq 
\sum_{\mu\leq\gamma}\binom{\gamma}{\mu}C_{\lambda,\gamma} e^{-\frac{\lambda}{2 }\omega(\beta_0n)}
e^{-\frac{\lambda}{4}\omega(\alpha_0k)}|\beta_0n|^{|\gamma|}
\left|\partial_t^{\gamma-\mu}\psi(t-\alpha_0k)\right|e^{\frac{\lambda}{4}\omega(t-\alpha_0k)}\\
\nonumber
&&\quad \leq C'_{\lambda,\gamma}e^{-\frac{\lambda}{4}\omega(\alpha_0k)}|\beta_0n|^{|\gamma|}
e^{-\frac{\lambda}{2 }\omega(\beta_0n)}
\eeqs
for some $C'_{\lambda,\gamma}>0$. Hence, for $\lambda>0$ sufficiently large the series
\beqsn
\sum_{k,n\in\Z^d} \partial_t^\gamma\left[c_{kn}e^{i\langle\beta_0n,t\rangle}
\psi(t-\alpha_0k)\right]
\eeqsn
is uniformly convergent on $t\in\R^d$.
This implies that $D_\psi c\in C^\infty(\R^d)$ for every $c\in\tilde\Lambda_\omega$.

In particular we can differentiate $D_\psi c$ in \eqref{A0} term by term, so that, to prove that
$D_\psi c\in\Sch(\R^d)$, we can estimate, for every $\gamma,\mu\in\N_0^d$,
\beqsn
\lefteqn{\left|t^\mu \partial_t^\gamma(D_\psi c)\right|=
\bigg|t^\mu
\sum_{k,n\in\Z^d} \partial_t^\gamma\left[c_{kn}e^{i\langle\beta_0n,t\rangle}
\psi(t-\alpha_0k)\right]\bigg|}\\
&& \quad \leq |t|^{|\mu|}
\sum_{k,n\in\Z^d}C_{\lambda,\gamma}\sum_{\tilde\mu\leq\gamma}
\binom{\gamma}{\tilde\mu}e^{-\frac{\lambda}{2 }\omega(\beta_0n)}
e^{-\frac{\lambda}{4}\omega(\alpha_0k)}
|\beta_0n|^{|\gamma|}\left|\partial_t^{\gamma-\tilde\mu}\psi(t-\alpha_0k)\right|
e^{\frac{\lambda}{4}\omega(t-\alpha_0k)}.
\eeqsn

Since
\beqsn
|t|^{|\mu|}\leq&&(|t-\alpha_0k|+|\alpha_0k|)^{|\mu|}
\leq  2^{|\mu|}(1+|t-\alpha_0k|^{|\mu|})(1+|\alpha_0k|^{|\mu|}),
\eeqsn
we  obtain 
\beqs
\nonumber
\left|t^\mu \partial_t^\gamma(D_\psi c)\right|\leq&&\sum_{k,n\in\Z^d}2^{|\mu|}C_{\lambda,\gamma}(1+|\alpha_0k|^{|\mu|})
e^{-\frac{\lambda}{4}\omega(\alpha_0k)}|\beta_0n|^{|\gamma|}
e^{-\frac{\lambda}{2 }\omega(\beta_0n)}\\
\nonumber
&&\cdot\sum_{\tilde\mu\leq\gamma}\binom{\gamma}{\tilde\mu}
(1+|t-\alpha_0k|^{|\mu|})|\partial_t^{\gamma-\tilde\mu}\psi(t-\alpha_0k)|
e^{\frac{\lambda}{4}\omega(t-\alpha_0k)}\\
\label{A41}
\leq&& C_{\lambda,\gamma,\mu}\sum_{k,n\in\Z^d}(1+|\alpha_0k|^{|\mu|})
e^{-\frac{\lambda}{4}\omega(\alpha_0k)}|\beta_0n|^{|\gamma|}
e^{-\frac{\lambda}{2 }\omega(\beta_0n)},
\eeqs
for some $C_{\lambda,\gamma,\mu}>0$  because $\psi\in\Sch_\omega(\R^d)$,
by Theorem~\ref{propSomega}(b).
Since the series in \eqref{A41} converges for  $\lambda>0$ sufficiently large, we have
 $D_\psi c\in\Sch(\R^d)$.

By Theorem~\ref{propSomega}(c), to see that $D_\psi c\in\Sch_\omega(\R^d)$
it is now enough to prove that, for every $\tilde\lambda>0$, the following two conditions hold:
\beqs
\label{A5}
&&\sup_{t\in\R^d}e^{\tilde\lambda\omega(t)}|D_\psi c(t)|<+\infty,\\
\label{A6}
&&\sup_{\xi\in\R^d}e^{\tilde\lambda\omega(\xi)}|\widehat{D_\psi c}(\xi)|<+\infty.
\eeqs

To prove \eqref{A5} we use the calculations in \eqref{A2'} and obtain, for every
$\lambda\geq4L\tilde\lambda$,
\beqs
\nonumber
e^{\tilde\lambda\omega(t)}|D_\psi c(t)|\leq&&
\nonumber
e^{\tilde\lambda\omega(t)}\sum_{k,n\in\Z^d}|c_{kn}||\psi(t-\alpha_0k)|\\
\label{A7}
\leq&&\sum_{k,n\in\Z^d}C_\lambda e^{-\frac{\lambda}{2 }\omega(\beta_0n)}
e^{-\frac{\lambda}{4}\omega(\alpha_0k)}e^{\tilde\lambda\omega(t)}
e^{-\frac{\lambda}{4L}\omega(t)}e^{\frac{\lambda}{4}}
e^{\frac{\lambda}{4}\omega(\alpha_0k-t)}
|\psi(t-\alpha_0k)|\\
\label{A71}
\leq&&\tilde{C}_\lambda e^{-\left(\frac{\lambda}{4L}-\tilde\lambda\right)\omega(0)}
\sum_{k,n\in\Z^d}e^{-\frac{\lambda}{2}\omega(\beta_0n)}e^{-\frac{\lambda}{4}\omega(\alpha_0k)},
\eeqs
for some $\tilde C_\lambda>0$,
since $\psi\in\Sch_\omega(\R^d)$.
For $\lambda$ sufficiently large the series in \eqref{A71} converges and hence \eqref{A5}
is proved.

To prove \eqref{A6} let us now consider
\beqsn
\widehat{D_\psi c}(\xi)=
\int_{\R^d}e^{-i\langle t,\xi\rangle}\sum_{k,n\in\Z^d}c_{kn}e^{i\langle\beta_0n,t\rangle}
\psi(t-\alpha_0k)dt.
\eeqsn

Since the series
\beqsn
e^{-i\langle t,\xi\rangle}\sum_{k,n\in\Z^d}c_{kn}
e^{i\langle\beta_0n,t\rangle}\psi(t-\alpha_0k)
\eeqsn
converges uniformly and moreover, by \eqref{A7} with $\tilde\lambda=0$ and $\lambda$ large enough,
\beqs
\nonumber
\big|e^{-i\langle t,\xi\rangle}&&\!\!\!\!\!\sum_{k,n\in[-N,N]^d}c_{kn}
e^{i\langle\beta_0n,t\rangle}\psi(t-\alpha_0k)\big|
\leq\sum_{k,n\in[-N,N]^d}|c_{kn}||\psi(t-\alpha_0k)|\\
\label{A72}
&\leq& \sum_{k,n\in\Z^d}\tilde{C}_\lambda
e^{\frac{\lambda}{4}}e^{-\frac{\lambda}{2 }\omega(\beta_0n)}
e^{-\frac{\lambda}{4}\omega(\alpha_0k)}e^{-\frac{\lambda}{4L}\omega(t)}\\
\nonumber
&\leq& \tilde{C}'_\lambda e^{-\frac{\lambda}{4L}\omega(t)}\in L^1(\R^d),
\eeqs
 by the Dominated Convergence Theorem
\beqsn
\widehat{D_\psi c}(\xi)=&&\sum_{k,n\in\Z^d}c_{kn}\int_{\R^d}
e^{-i\langle t,\xi\rangle}e^{i\langle\beta_0n,t\rangle}\psi(t-\alpha_0k)dt\\
=&&\sum_{k,n\in\Z^d}c_{kn}\int_{\R^d}
e^{-i\langle t+\alpha_0k,\xi-\beta_0n\rangle}\psi(t)dt\\
=&&\sum_{k,n\in\Z^d}c_{kn}e^{-i\langle\alpha_0k,\xi-\beta_0n\rangle}
\hat{\psi}(\xi-\beta_0n).
\eeqsn

Then
\beqs
\label{A8}
\left|e^{\tilde\lambda\omega(\xi)}\widehat{D_\psi c}(\xi)\right|
\leq e^{\tilde\lambda\omega(\xi)}
\sum_{k,n\in\Z^d}|c_{kn}|
|\hat{\psi}(\xi-\beta_0n)|
\eeqs
and since $\hat\psi\in\Sch_\omega(\R^d)$ satisfies the same estimates as $\psi$
the proof of \eqref{A6} is similar to that of \eqref{A5} and so
$D_\psi c\in\Sch_\omega(\R^d)$.

Now, we see that $D_\psi$ is continuous. To this aim we have to
estimate \eqref{A5} and \eqref{A6}, for every $\tilde\lambda>0$, by some seminorm of $c=(c_{kn})_{k,n\in\Z^d}$ in $\tilde\Lambda_\omega$.
Writing, for every $\lambda>0$,
\beqsn
|c_{kn}|\leq\sup_{k,n\in\Z^d}\left(|c_{kn}|e^{\lambda\omega(\alpha_0k,\beta_0n)}\right)
\cdot e^{-\lambda\omega(\alpha_0k,\beta_0n)},
\eeqsn
and proceeding as to obtain \eqref{A71},
with $\sup_{k,n\in\Z^d}\left(|c_{kn}|e^{\lambda\omega(\alpha_0k,\beta_0n)}\right)$ instead of
$C_\lambda$ in \eqref{A2''},
 we obtain that for every $\tilde\lambda>0$ there
exist $\lambda>0$ and $C_{\tilde\lambda}>0$ such that
\beqsn
\sup_{t\in\R^d}e^{\tilde\lambda\omega(t)}|D_\psi c(t)|
\leq C_{\tilde\lambda}\sup_{k,n\in\Z^d}(|c_{kn}|e^{\lambda\omega(\alpha_0k,\beta_0n)}).
\eeqsn
Similarly, from \eqref{A8},
\beqsn
\sup_{\xi\in\R^d}e^{\tilde\lambda\omega(\xi)}|\widehat{D_\psi c}(\xi)|
\leq C'_{\tilde\lambda}\sup_{k,n\in\Z^d}(|c_{kn}|e^{\lambda\omega(\alpha_0k,\beta_0n)}),
\eeqsn
for some $C'_{\tilde\lambda}>0$.
Therefore $D_\psi$ is continuous and the proof is complete.
\end{proof}

We already know from the general theory of Gabor frames that
$D_{\psi_0}C_{\varphi_0}=\Id$ on
$\Sch_\omega(\R^d)$, as already observed in \eqref{add1}. Hence the operator in
\eqref{iso} is injective, surjective, continuous and its inverse
$D_{\psi_0}|_{\Im C_{\varphi_0}}$  is continuous. Since we consider
on $\Im C_{\varphi_0}$ the topology induced by
$\tilde\Lambda_\omega$, to see that $\Sch_\omega(\R^d)$ is nuclear it is enough to
check that $\tilde\Lambda_\omega$
is nuclear \cite[Prop. 28.6]{MV}.

\section{Nuclearity of $\Sch_\omega(\R^d)$}
\label{sec3}

In this section we show that  $\tilde\Lambda_\omega$ is nuclear by an application of Grothendieck-Pietsch criterion.
For a countable lattice $\Lambda$, we consider
a matrix
\beqs
\label{matrixA}
A=(a_{\sigma, k})_{\afrac{\sigma\in\Lambda,}{k\in\N}}
\eeqs
of K\"othe type with positive entries, in the sense that $A$ satisfies
\beqs
\label{k1}
&&a_{\sigma, k}>0\qquad\qquad\!\forall\sigma\in\Lambda, k\in\N,\\
\label{k2}
&&a_{\sigma,k}\leq a_{\sigma,k+1}\qquad\forall\sigma\in\Lambda, k\in\N.
\eeqs

We denote
\beqsn
&&\tilde\lambda^p(A):=
\Big\{c=(c_\sigma)_{\sigma\in\Lambda}:\
\|c\|_k:=\bigg(\sum_{\sigma\in\Lambda}|c_\sigma|^pa^p_{\sigma,k}\bigg)^{1/p}
<+\infty,\ \forall k\in\N\Big\},\quad 1\leq p<+\infty,\\
&&\tilde\lambda^\infty(A):=
\Big\{c=(c_\sigma)_{\sigma\in\Lambda}:\
\|c\|_k:=\sup_{\sigma\in\Lambda}|c_\sigma|a_{\sigma,k}<+\infty,\ \forall k\in\N\Big\}\\
&&\tilde c_0(A):=\Big\{c\in\tilde\lambda^\infty(A):\ \lim_{|\sigma|\to+\infty}|c_\sigma|
a_{\sigma,k}=0,\ \forall k\in\N\Big\}.
\eeqsn
 
 We put
 \beqsn
 \tilde\ell^p:=&&
\Big\{c=(c_\sigma)_{\sigma\in\Lambda}:\
\bigg(\sum_{\sigma\in\Lambda}|c_\sigma|^p\bigg)^{1/p}
<+\infty,\ \forall k\in\N\Big\}, \qquad 1\leq p<+\infty.
\eeqsn
Analogously, we define $\tilde\ell^\infty$ and $\tilde c_0$.
The spaces $\tilde\ell^p$, for $1\leq p\leq+\infty$, and $\tilde c_0$
are Banach spaces, while $\tilde\lambda^p(A)$, for $1\leq p\leq+\infty$, and  $\tilde c_0(A)$ are Fr\'echet spaces.
%
%
We consider the
canonical basis $(e_\eta)_{\eta\in\Lambda}$:
\beqsn
e_\eta=(\delta_{\eta\sigma})_{\sigma\in\Lambda}=\begin{cases}
1,&\sigma=\eta\cr
0,&\sigma\neq\eta.
\end{cases}
\eeqsn

%
%

Since $\Lambda$ is countable, it is obvious that $(e_\eta)_{\eta\in\Lambda}$ is a Schauder basis for $\tilde c_0(A)$ and $\tilde\lambda^p(A)$, for $1\leq p<+\infty$.

The following result is analogous to \cite[Prop. 28.16]{MV}. We give the proof in the case of lattices for the sake of completeness.
\begin{Th}
\label{prop2816MV}
Let $A$ be as in \eqref{matrixA} a matrix of K\"othe type with positive entries.
The following are equivalent:
\begin{itemize}
\item[(a)]
$\tilde\lambda^p(A)$ is nuclear for some $1\leq p\leq+\infty$;
\item[(b)]
$\tilde\lambda^p(A)$ is nuclear for all $1\leq p\leq+\infty$;
\item[(c)]
$\forall k\in\N\,\exists m\in\N,m\geq k\ \mbox{s.t.}\
\sum_{\sigma\in\Lambda}a_{\sigma,k}a_{\sigma,m}^{-1}<+\infty$.
\end{itemize}
\end{Th}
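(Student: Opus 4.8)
The implication $(b)\Rightarrow(a)$ is trivial, so the plan is to prove $(a)\Rightarrow(c)$ and $(c)\Rightarrow(b)$, in both cases by reducing nuclearity to a single computation with diagonal operators on the local Banach spaces of $\tilde\lambda^p(A)$. Recall from \cite[Ch.~28]{MV} that a Fr\'echet space $E$ with a fundamental increasing sequence of seminorms $(\|\cdot\|_k)_{k\in\N}$ is nuclear if and only if for every $k\in\N$ there is $m\geq k$ such that the canonical linking map $\hat E_m\to\hat E_k$ between the associated local Banach spaces is nuclear. Since all the entries $a_{\sigma,k}$ are strictly positive, each $\|\cdot\|_k$ is already a norm on $\tilde\lambda^p(A)$ (so no quotient is needed), and the map $c=(c_\sigma)_{\sigma\in\Lambda}\mapsto(c_\sigma a_{\sigma,k})_{\sigma\in\Lambda}$ is a linear isometry of $(\tilde\lambda^p(A),\|\cdot\|_k)$ into $\tilde\ell^p$ (into $\tilde\ell^\infty$ when $p=+\infty$) whose range contains all finitely supported sequences. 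Hence for $1\leq p<+\infty$ the local Banach space $\hat E_k$ is isometrically isomorphic to $\tilde\ell^p$, and for $p=+\infty$ it embeds isometrically into $\tilde\ell^\infty$; in either case the normalised vectors $u_\sigma^{(k)}:=e_\sigma/a_{\sigma,k}$ satisfy $\|u_\sigma^{(k)}\|_k=1$, and the biorthogonal functionals $(u_\sigma^{(k)})^*\colon c\mapsto a_{\sigma,k}c_\sigma$ have norm $1$ in $\hat E_k'$.

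Under these identifications, for $m\geq k$ the linking map $\hat E_m\to\hat E_k$ is the inclusion of $\tilde\lambda^p(A)$ into itself, and with respect to the normalised bases it is the diagonal operator
\beqsn
D_{k,m}\colon u_\sigma^{(m)}\longmapsto \frac{a_{\sigma,k}}{a_{\sigma,m}}\,u_\sigma^{(k)},
\eeqsn
with diagonal $d_\sigma:=a_{\sigma,k}a_{\sigma,m}^{-1}\in(0,1]$. The core of the argument is then the diagonal nuclearity criterion: $D_{k,m}$ is a nuclear operator if and only if $(d_\sigma)_{\sigma\in\Lambda}\in\tilde\ell^1$, i.e. $\sum_{\sigma\in\Lambda}a_{\sigma,k}a_{\sigma,m}^{-1}<+\infty$. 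For the ``if'' part one uses the explicit nuclear representation $D_{k,m}=\sum_{\sigma\in\Lambda}d_\sigma\,(u_\sigma^{(m)})^*\otimes u_\sigma^{(k)}$, of nuclear norm at most $\sum_\sigma d_\sigma\|(u_\sigma^{(m)})^*\|\,\|u_\sigma^{(k)}\|=\sum_\sigma d_\sigma<+\infty$. For the ``only if'' part one takes an arbitrary nuclear representation $D_{k,m}=\sum_j\psi_j\otimes z_j$ with $\sum_j\|\psi_j\|\,\|z_j\|<+\infty$, writes
\beqsn
d_\sigma=(u_\sigma^{(k)})^*\bigl(D_{k,m}u_\sigma^{(m)}\bigr)=\sum_j\psi_j(u_\sigma^{(m)})\,(u_\sigma^{(k)})^*(z_j),
\eeqsn
sums over $\sigma\in\Lambda$, and estimates by H\"older's inequality with exponents $p',p$ (with the obvious modification when $p\in\{1,+\infty\}$), using that $\sum_\sigma|\psi_j(u_\sigma^{(m)})|^{p'}\leq\|\psi_j\|^{p'}$ and $\sum_\sigma|(u_\sigma^{(k)})^*(z_j)|^{p}\leq\|z_j\|^{p}$; these two bounds hold because the $u_\sigma^{(m)}$ are (the image of) the canonical unit vectors of $\tilde\ell^p$ and $z_j\in\hat E_k$. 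This yields $\sum_\sigma d_\sigma\leq\sum_j\|\psi_j\|\,\|z_j\|<+\infty$.

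Assembling the pieces, the Fr\'echet nuclearity criterion says that $\tilde\lambda^p(A)$ is nuclear exactly when for every $k$ there is $m\geq k$ with $D_{k,m}$ nuclear, which by the diagonal criterion is exactly condition $(c)$; as $(c)$ does not mention $p$, this gives $(a)\Rightarrow(c)$ and $(c)\Rightarrow(b)$ at once, and with the trivial $(b)\Rightarrow(a)$ the three statements are equivalent. I expect the only delicate points in the writeup to be the behaviour at the endpoint $p=+\infty$ — where the completion $\hat E_k$ is a weighted $c_0$-type space rather than a weighted $\ell^\infty$-space, so one should argue through the isometric embedding into $\tilde\ell^\infty$ and the normalised basis $(u_\sigma^{(k)})$ instead of identifying $\hat E_k$ with a concrete space — together with the ``only if'' direction of the diagonal criterion; everything else is routine bookkeeping with weighted sequence norms.
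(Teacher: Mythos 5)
Your proof is correct, but it follows a genuinely different route from the paper's. The paper invokes the Grothendieck--Pietsch criterion for Fr\'echet spaces with a Schauder basis (\cite[Thm.~28.15]{MV}): for $1\leq p<+\infty$ the canonical vectors $(e_\eta)$ form a Schauder basis of $\tilde\lambda^p(A)$ and the criterion yields $(c)$ immediately, while for $p=+\infty$ --- where $(e_\eta)$ need not be a basis --- the paper takes a detour: nuclearity implies the space is Schwartz, hence the inclusions into the local Banach spaces $E_k$ are compact; transporting these to compact diagonal operators on $\tilde c_0$ and building compact projections onto the subspaces indexed by $I_\varepsilon=\{\sigma: a_{\sigma,k}a_{\sigma,m}^{-1}\geq\varepsilon\}$ forces each $I_\varepsilon$ to be finite, so $a_{\sigma,k}a_{\sigma,m}^{-1}\to0$ and $\tilde\lambda^\infty(A)=\tilde c_0(A)$, after which Grothendieck--Pietsch applies to $\tilde c_0(A)$. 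You instead work straight from the definition of nuclearity via nuclear linking maps between local Banach spaces, identify those maps as the diagonal operators $D_{k,m}$ with diagonal $d_\sigma=a_{\sigma,k}a_{\sigma,m}^{-1}$, and prove the diagonal criterion by hand: the explicit rank-one expansion for sufficiency, and the trace-type computation $\sum_\sigma d_\sigma=\sum_\sigma\sum_j\psi_j(u_\sigma^{(m)})(u_\sigma^{(k)})^*(z_j)$ with H\"older for necessity (your $p'$-bound for $\sum_\sigma|\psi_j(u_\sigma^{(m)})|^{p'}$ at $p=+\infty$, i.e.\ $p'=1$, follows by testing $\psi_j$ against finite unimodular combinations of the $u_\sigma^{(m)}$, which all have norm one). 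What your approach buys is uniformity in $p$ (including $p=+\infty$, at the cost of the care you already flag about the completion being only a closed subspace of $\tilde\ell^\infty$ containing the normalised basis) and self-containedness --- you are in effect reproving the Grothendieck--Pietsch criterion in the diagonal setting --- whereas the paper's route is shorter for $p<+\infty$ because it outsources that work to \cite{MV}, paying for it with the separate compactness argument and the identification $\tilde\lambda^\infty(A)=\tilde c_0(A)$ at the endpoint.
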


\begin{proof}
If $1\leq p<+\infty$, then $\tilde\lambda^p(A)$ is a Fr\'echet space 
with the increasing fundamental system of seminorms $(\|\cdot\|_m)_{m\in\N}$ and
the Schauder basis $(e_\eta)_{\eta\in\Lambda}$. We can then apply
Grothendieck-Pietsch criterion (see \cite[Thm. 28.15]{MV} or \cite{Pi}) to 
$\tilde\lambda^p(A)$
and obtain that $\tilde\lambda^p(A)$ is nuclear if and only if
\beqs
\label{condGP}
\forall k\in\N\ \exists m\in\N,m\geq k:\ 
\sum_{\sigma\in\Lambda}\|e_\sigma\|_k\|e_\sigma\|_m^{-1}<+\infty.
\eeqs
Since 
\beqsn
\|e_\sigma\|_k=\bigg(\sum_{\eta\in\Lambda}|\delta_{\sigma\eta}|^pa_{\eta,k}^p\bigg)^{1/p}
=a_{\sigma,k},
\eeqsn
the thesis is clear for $p<+\infty$.

Now, we treat the case $p=+\infty$. Assume that
$\tilde\lambda^\infty(A)$ is nuclear. We prove that
\beqs
\label{star}
\forall k\in\N\ \exists m\in\N,m\geq k:\
\lim_{|\sigma|\to+\infty}a_{\sigma,k}a_{\sigma,m}^{-1}=0.
\eeqs
To this aim, for every $k\in\N$, we denote
\beqsn
E_k:=\Big\{c=(c_\sigma)_{\sigma\in\Lambda}:\
\|c\|_k=\sup_{\sigma\in\Lambda}|c_\sigma|a_{\sigma,k}<+\infty\Big\}
\eeqsn
 the local space 
of $\tilde\lambda^\infty(A)$. This is a Banach space with the norm $\|\cdot\|_k$ (observe that $a_{\sigma,k}>0$ for all
$\sigma\in\Lambda,k\in\N$). The operator
\beqsn
A_k:\quad  E_k&&\longrightarrow\tilde\ell^\infty\\
c=(c_\sigma)_{\sigma\in\Lambda}&&\longmapsto
A_k(c):=(c_\sigma a_{\sigma,k})_{\sigma\in\Lambda}
\eeqsn
 is an isometric isomorphism and 
$A_k(E_k)=\tilde\ell^\infty$. For every $k\in\N$, the inclusion
\beqsn
i_k:\quad \tilde\lambda^\infty(A)&&\longrightarrow E_k\\
(c_\sigma)_{\sigma\in\Lambda}&&\longmapsto(c_\sigma)_{\sigma\in\Lambda}
\eeqsn
is compact by \cite[Lemma~24.17]{MV}. Indeed, $\tilde\lambda^\infty(A)$ is a locally convex space, which is nuclear (by assumption) and hence Schwartz by \cite[Cor. 28.5]{MV};
moreover
$E_k$ is a Banach space and hence we can apply \cite[Lemma~24.17]{MV}
and obtain that there exists a neighbourhood $V$ of 0 in $\tilde\lambda^\infty(A)$,
that we can take
of the form $\{c\in\tilde\lambda^\infty(A):\ \|c\|_m<\varepsilon\}$, for some $\varepsilon>0$
and with $m\geq k$ (the family of seminorms $(\|\cdot\|_m)_{m\in\N}$
is increasing), whose image through $i_k$ is precompact, and hence compact.
Moreover, for $m\geq k$ clearly  $E_m\subseteq E_k$. So,
 for every $k\in\N$ there exists $m\geq k$ such that the inclusion $i^k_m=i_k|_{E_m}$
\beqsn
i^k_m:\qquad E_m&&\longrightarrow E_k\\
(c_\sigma)_{\sigma\in\Lambda}&&\longmapsto(c_\sigma)_{\sigma\in\Lambda}
\eeqsn
is compact (and also $i^k_{m'}$ for all $m'\geq m$).

Then, we put $D:=A_k\circ i^k_m\circ A_m^{-1}$:
\beqsn
D:\qquad\tilde\ell^\infty&&\longrightarrow\tilde\ell^\infty\\
(c_\sigma)_{\sigma\in\Lambda}&&\longmapsto
(c_\sigma a_{\sigma,m}^{-1}a_{\sigma,k})_{\sigma\in\Lambda}.
\eeqsn
The operator $D$ is clearly compact.
The restriction $\tilde D:=D\big|_{\tilde c_0}$ satisfies $\tilde D(\tilde c_0)\subseteq \tilde c_0$, for $m\geq k$,
since
\beqsn
|c_\sigma|a_{\sigma,m}^{-1}a_{\sigma,k}
\leq|c_\sigma|a_{\sigma,m}^{-1}a_{\sigma,m}=|c_\sigma|\to0,
\eeqsn
for $c=(c_\sigma)_{\sigma\in\Lambda}\in\tilde c_0$.
The operator $\tilde D$ is also compact.

For every $\varepsilon>0$ we define, for $m\ge k,$
\beqsn
I_\varepsilon:=\{\sigma\in\Lambda:\ a_{\sigma,k}a_{\sigma,m}^{-1}\geq\varepsilon\},
\eeqsn
and also
\beqsn
T_\varepsilon:\quad \tilde c_0&&\longrightarrow\tilde c_0\\
c=(c_\sigma)_{\sigma\in\Lambda}&&
\longmapsto(T_\varepsilon(c))_{\sigma\in\Lambda}
=\begin{cases}
c_\sigma a_{\sigma,k}^{-1}a_{\sigma,m},&\sigma\in I_\varepsilon\cr
0,& otherwise.
\end{cases}
\eeqsn

The operator $T_\varepsilon:\,\tilde c_0\to\tilde c_0$ is continuous since
\beqsn
\sup_{\sigma\in\Lambda}|(T_\varepsilon(c))_\sigma|\leq
\frac1\varepsilon\sup_{\sigma\in\Lambda}|c_\sigma|.
\eeqsn

Now we consider
\beqsn
P_\varepsilon:=\tilde DT_\varepsilon:\
\tilde c_0&&\longrightarrow\tilde c_0\\
c=(c_\sigma)_{\sigma\in\Lambda}&&
\longmapsto\tilde c=(\tilde c_\sigma)_{\sigma\in\Lambda}
=\begin{cases}
c_\sigma, &\sigma\in I_\varepsilon\cr
0,&\sigma\in\Lambda\setminus I_\varepsilon.
\end{cases}
\eeqsn
Hence, $P_\varepsilon$ is a compact projection on
\beqsn
S_\varepsilon:=\{(c_\sigma)_{\sigma\in\Lambda}\in\tilde c_0:\
c_\sigma=0\ \mbox{for}\ \sigma\in\Lambda\setminus I_\varepsilon\}
\subseteq\tilde c_0.
\eeqsn
Since $\tilde c_0$ is a Banach space we can apply \cite[Cor. 15.6]{MV} and
obtain that the kernel $\ker(\Id-P_\varepsilon)$ is finite dimensional.
But $P_\varepsilon$ is a
projection and hence its image 
$
\Im(P_\varepsilon)=\ker(\Id-P_\varepsilon)
$
 is finite dimensional and
 $I_\varepsilon$ must be finite for every $\varepsilon>0$.
Then
\beqsn
\lim_{|\sigma|\to+\infty}a_{\sigma,k}a_{\sigma,m}^{-1}=0
\eeqsn
and \eqref{star} is proved.

This implies that $\tilde\lambda^\infty(A)=\tilde c_0(A)$.  Indeed, if
$c=(c_\sigma)_{\sigma\in\Lambda}\in\tilde\lambda^\infty(A)$ then for every $k\in\N$ we find
$m\in\N$, $m\geq k$ such that
\eqref{star} holds and we get 
\beqsn
\lim_{|\sigma|\to+\infty}|c_\sigma|a_{\sigma,k}=
\lim_{|\sigma|\to+\infty}|c_\sigma|a_{\sigma,m}a_{\sigma,k}a_{\sigma,m}^{-1}=0,
\eeqsn
since $|c_\sigma|a_{\sigma,m}$ is bounded because $c\in\tilde\lambda^\infty(A)$ and
$a_{\sigma,k}a_{\sigma,m}^{-1}\to0$ by \eqref{star}. Therefore $c\in\tilde c_0(A)$.

Now, $\tilde c_0(A)$ is a Fr\'echet space
endowed with the increasing fundamental system of seminorms $(\|~\cdot~\|_m)_{m\in\N}$ and
the Schauder basis $(e_\eta)_{\eta\in\Lambda}$. We can then apply
Grothendieck-Pietsch criterion  \eqref{condGP} to $\tilde c_0(A)$
 for
\beqsn
\|e_\sigma\|_k=\sup_{\eta\in\Lambda}|\delta_{\sigma \eta}|a_{\eta,k}=a_{\sigma,k}.
\eeqsn
Since $\tilde c_0(A)=\tilde\lambda^\infty(A)$ is nuclear by assumption, then
\eqref{condGP} implies $(c)$.

On the contrary, if $(c)$ holds then $\tilde c_0(A)$ is nuclear by the
Grothendiech-Pietsch criterion \eqref{condGP}.
We see again that $\tilde\lambda^\infty(A)=\tilde c_0(A)$.
If
$c=(c_\sigma)_{\sigma\in\Lambda}\in\tilde\lambda^\infty(A)$, we have
\beqsn
|c_\sigma|a_{\sigma,k}=|c_\sigma|a_{\sigma,m}a_{\sigma,k}a_{\sigma,m}^{-1}
\longrightarrow0,
\eeqsn
since $|c_\sigma|a_{\sigma,m}$ is bounded for $c\in\tilde\lambda^\infty(A)$
and \eqref{star} holds by the convergence of the series in $(c)$.
Therefore $c\in\tilde c_0(A)$.
\end{proof}

Observe that, for $\Lambda=\alpha_0\Z^d\times\beta_0\Z^d$ as fixed
in Section~\ref{sec2}, the matrix
\beqs
\label{matrixAtilde}
\tilde{A}=(e^{k\omega(\sigma)})_{\afrac{\sigma\in\Lambda,}{k\in\N}}
\eeqs
satisfies \eqref{k1} and \eqref{k2}. Hence the space $\tilde\Lambda_\omega$
defined in \eqref{Lambdaomega} is, in fact,
%
\beqsn
\tilde\lambda^\infty(\tilde A):=
\Big\{c=(c_\sigma)_{\sigma\in\Lambda}:\
\|c\|_k:=\sup_{\sigma\in\Lambda}|c_\sigma|e^{k\omega(\sigma)}<+\infty,\ \forall k\in\N\Big\}.
\eeqsn

\begin{Prop}
\label{cor1}
The sequence space $\tilde\Lambda_\omega$ is nuclear.
\end{Prop}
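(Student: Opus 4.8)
The plan is to apply the Grothendieck--Pietsch criterion in the form packaged as Theorem~\ref{prop2816MV}. We have already observed that $\tilde\Lambda_\omega=\tilde\lambda^\infty(\tilde A)$ for the K\"othe matrix $\tilde A=(e^{k\omega(\sigma)})_{\sigma\in\Lambda,k\in\N}$ defined in \eqref{matrixAtilde}, and that $\tilde A$ satisfies the positivity and monotonicity conditions \eqref{k1} and \eqref{k2}. Hence it suffices to verify condition~(c) of Theorem~\ref{prop2816MV}, namely that for every $k\in\N$ there exists $m\geq k$ such that
\beqsn
\sum_{\sigma\in\Lambda}e^{k\omega(\sigma)}e^{-m\omega(\sigma)}
=\sum_{\sigma\in\Lambda}e^{-(m-k)\omega(\sigma)}<+\infty.
\eeqsn

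First I would reduce this to a single summability estimate: since the exponent $-(m-k)\omega(\sigma)$ only depends on the difference $m-k$, it is enough to show that $\sum_{\sigma\in\Lambda}e^{-\ell\omega(\sigma)}<+\infty$ for some (equivalently, all sufficiently large) $\ell\in\N$; then given $k$ one simply takes $m=k+\ell$. Next I would invoke property~$(\gamma)$ of Definition~\ref{defomega}, which gives constants $a\in\R$ and $b>0$ with $\omega(t)\geq a+b\log(1+t)$ for all $t\geq 0$. Writing $\sigma=(\alpha_0k',\beta_0n')$ with $(k',n')\in\Z^d\times\Z^d$, we get $|\sigma|\geq c(|k'|+|n'|)$ for a suitable $c>0$ depending only on $\alpha_0,\beta_0$, so that $\omega(\sigma)=\omega(|\sigma|)\geq a+b\log(1+c(|k'|+|n'|))$. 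Therefore
\beqsn
\sum_{\sigma\in\Lambda}e^{-\ell\omega(\sigma)}
\leq e^{-\ell a}\sum_{(k',n')\in\Z^{2d}}\big(1+c(|k'|+|n'|)\big)^{-\ell b},
\eeqsn
and the last series converges as soon as $\ell b>2d$, i.e. for $\ell>2d/b$. This establishes condition~(c).

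Having checked~(c), Theorem~\ref{prop2816MV} yields that $\tilde\lambda^\infty(\tilde A)=\tilde\Lambda_\omega$ is nuclear, which is the assertion of Proposition~\ref{cor1}. There is essentially no obstacle here: the only point requiring a little care is the passage from the lattice $\Lambda=\alpha_0\Z^d\times\beta_0\Z^d$ to $\Z^{2d}$ and the comparison of the Euclidean norm $|\sigma|$ with the $\ell^1$-size of the lattice index, but this is a routine change of variables absorbed into the constant $c$. All the analytic content has already been isolated in property~$(\gamma)$ (the logarithmic lower bound on $\omega$) and in the equivalence proved in Theorem~\ref{prop2816MV}.
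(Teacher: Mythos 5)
Your proof is correct and follows essentially the same route as the paper: reduce to condition (c) of Theorem~\ref{prop2816MV} for the matrix $\tilde A$, then use property $(\gamma)$ to bound $e^{-(m-k)\omega(\sigma)}$ by a constant times $(1+|\sigma|)^{-b(m-k)}$, which is summable over the lattice once $b(m-k)>2d$. The only difference is that you spell out the comparison between $|\sigma|$ and the $\ell^1$-size of the lattice index, which the paper leaves implicit.
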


\begin{proof}
By Theorem~\ref{prop2816MV} we have that
$\tilde\Lambda_\omega=\tilde\lambda^\infty(\tilde{A})$
 is nuclear if and only if
\beqs
\label{starstar}
\forall k\in\N\ \exists m\in\N,m\geq k, \
\mbox{s.t.}\
\sum_{\sigma\in\Lambda}e^{k\omega(\sigma)-m\omega(\sigma)}<+\infty.
\eeqs
Since, by condition $(\gamma)$ of
Definition~\ref{defomega},
\beqsn
e^{k\omega(\sigma)-m\omega(\sigma)}
\leq e^{-(m-k)a}e^{-(m-k)b\log(1+|\sigma|)}
=e^{-(m-k)a}\frac{1}{(1+|\sigma|)^{b(m-k)}},
\eeqsn
we have, for $m>k+\frac{2d}{b}$,
\beqsn
\sum_{\sigma\in\Lambda}\frac{1}{(1+|\sigma|)^{b(m-k)}}<+\infty.
\eeqsn
\end{proof}

As we explained at the end of Section~\ref{sec2}, we deduce:

\begin{Th}
\label{cor3}
The space $\Sch_\omega(\R^d)$ is nuclear.
\end{Th}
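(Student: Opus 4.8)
The plan is to chain together the results established earlier in the paper, so that no new computation is required. The statement to prove is that $\Sch_\omega(\R^d)$ is nuclear, and every ingredient is already in place.

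First I would recall that Proposition~\ref{prop1Ale} shows the analysis operator $C_{\varphi_0}\colon\Sch_\omega(\R^d)\to\tilde\Lambda_\omega$ is continuous, and Proposition~\ref{prop2Ale} shows the synthesis operator $D_{\psi_0}\colon\tilde\Lambda_\omega\to\Sch_\omega(\R^d)$ is continuous. Second, the Gabor frame identity $D_{\psi_0}C_{\varphi_0}=\Id$ on $\Sch_\omega(\R^d)$, noted in \eqref{add1}, implies that $C_{\varphi_0}$ is injective with continuous left inverse; hence $C_{\varphi_0}$ restricts to a topological isomorphism of $\Sch_\omega(\R^d)$ onto its image $\Im C_{\varphi_0}\subseteq\tilde\Lambda_\omega$, endowed with the induced topology, as recorded in \eqref{iso}. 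Third, Proposition~\ref{cor1} states that $\tilde\Lambda_\omega$ is nuclear. Since nuclearity is inherited by linear subspaces (with the induced topology) — e.g.\ \cite[Prop.~28.6]{MV} — the subspace $\Im C_{\varphi_0}$ is nuclear, and therefore so is the isomorphic space $\Sch_\omega(\R^d)$.

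There is no real obstacle here: the theorem is a short corollary, and all the work has been done in Sections~\ref{sec2} and~\ref{sec3}. The one point that deserves a word of care is the stability of nuclearity under passing to subspaces, since $\Im C_{\varphi_0}$ need not be closed in $\tilde\Lambda_\omega$; but \cite[Prop.~28.6]{MV} covers arbitrary subspaces, so this is immediate. Thus the proof is simply: by \eqref{iso} together with Propositions~\ref{prop1Ale} and~\ref{prop2Ale} and the identity \eqref{add1}, $\Sch_\omega(\R^d)$ is topologically isomorphic to the subspace $\Im C_{\varphi_0}$ of $\tilde\Lambda_\omega$; by Proposition~\ref{cor1} the space $\tilde\Lambda_\omega$ is nuclear; and a subspace of a nuclear space is nuclear, so $\Sch_\omega(\R^d)$ is nuclear.
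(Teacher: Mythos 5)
Your proposal is correct and follows exactly the route the paper itself takes: the identity $D_{\psi_0}C_{\varphi_0}=\Id$ from \eqref{add1} together with Propositions~\ref{prop1Ale} and~\ref{prop2Ale} makes \eqref{iso} a topological isomorphism onto $\Im C_{\varphi_0}$ with the induced topology, and then nuclearity follows from Proposition~\ref{cor1} and the stability of nuclearity under subspaces (\cite[Prop.~28.6]{MV}), which is precisely the argument the authors record at the end of Section~\ref{sec2}. Your remark that the cited result covers arbitrary, not necessarily closed, subspaces is a sensible point of care and is consistent with the paper's use of it.
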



\section{Nuclearity of $\Sch_{(M_p)}(\R^d)$ with $L^2$ norms}
\label{sec4}

Let $(M_p)_{p\in\N_0}$ be a sequence
such that 
$M_p^{1/p}\to+\infty$ as $p\to+\infty$
 and consider
the locally convex space of rapidly decreasing ultradifferentiable functions
\beqs
\label{SMp}
\qquad
\Sch_{(M_p)}(\R^d):=\Big\{f\in C^\infty(\R^d): \
\sup_{\alpha,\beta\in\N_0^d}\sup_{x\in\R^d}\frac{j^{|\alpha+\beta|}}{M_{|\alpha+\beta|}}
\|x^\alpha \partial^\beta f(x)\|_2<+\infty,\ \forall\, j\in\N \Big\},
\eeqs
where $\|\cdot\|_2$ denotes the $L^2$~norm. We write the associated function in the usual way:
\beqs
\label{Mt}
M(t)=\sup_{p\in\N}\log\frac{t^pM_0}{M_p}.
\eeqs

 Langenbruch~\cite{L} uses \eqref{12L} to show that the Hermite functions
$H_\gamma$, for $\gamma\in\N_0^d$, are an absolute Schauder basis in $\Sch_{(M_p)}(\R^d)$, where
\beqsn
H_\gamma(x):=(2^{|\gamma|}\gamma!\pi^{d/2})^{-1/2}\exp\left(-\sum_{j=0}^d\frac{x_j^2}{2}\right)h_\gamma(x),
\eeqsn
and the Hermite polynomials $h_\gamma$ are given by
\beqsn
h_\gamma(x):=(-1)^{|\gamma|}\exp\left(\sum_{j=0}^dx_j^2\right)
\partial^\gamma\exp\left(-\sum_{j=0}^dx_j^2\right),\qquad x\in\R^d.
\eeqsn

Here we consider a matrix $A^*$ 
of K\"othe type with positive entries as in Section~\ref{sec3}
for $\Lambda=\N_0^d$, defined by
\beqs
\label{ank}
a_{\gamma,k}:=e^{M(k|\gamma|^{1/2})}, \qquad \gamma\in\N_0^d, \, k\in\N,
\eeqs
where $M(t)$ is the associated function defined by \eqref{Mt}.
We characterize when $\Sch_{(M_p)}(\R^d)$ is nuclear with
Theorem~3.5 of \cite{P}, that we state here in our setting, for the convenience of the reader. In what follows we denote $\lambda^1:=\tilde\lambda^1(A^*)$ and $\lambda^\infty:=\tilde\lambda^\infty(A^*)$.
\begin{Th}
\label{th35P}
Assume that the inclusion $j:\,\lambda^1\to\lambda^\infty$ has dense image.
Let $E$ be a locally convex space such that we have a commutative diagram of
continuous linear operators of the form

\vspace{2mm}
\hspace*{35mm}\begin{tikzpicture}
\draw[->] (-1.5,4)--(0.6,4);
\draw[->] (1,3.7)--(1,1.5);
\draw[->] (-1.8,3.8)--(0.7,1.5);
\node at (-2,4.1) {$\lambda^1$};
\node at (1,1.2) {$\lambda^\infty$};
\node at (-0.3,4.4) {$T$};
\node at (1.3,2.7) {$S$};
\node at (-1,2.3) {$j$};
\node at (1,4.1) {$E$};
\end{tikzpicture}

with $S$ injective or $T$ with dense image. Then $\lambda^1$ is nuclear if and only if $E$
is nuclear.
\end{Th}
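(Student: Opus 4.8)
The plan is to run everything through the local Banach spaces of $E$ together with the K\"othe structure of $\lambda^1=\tilde\lambda^1(A^*)$ and $\lambda^\infty=\tilde\lambda^\infty(A^*)$. The first step is to translate nuclearity of $\lambda^1$ into a summability condition: by Theorem~\ref{prop2816MV}, $\lambda^1$ is nuclear if and only if $\lambda^\infty$ is nuclear if and only if condition $(c)$ holds for $A^*$, i.e.
\[
(\mathrm{GP})\qquad \forall k\in\N\ \exists m\geq k:\quad \sum_{\gamma\in\N_0^d} e^{M(k|\gamma|^{1/2})-M(m|\gamma|^{1/2})}<+\infty .
\]
The next observation I would record, and which is the real engine of the easy direction, is that $(\mathrm{GP})$ is equivalent to the inclusion $j\colon\lambda^1\to\lambda^\infty$ being a \emph{topological isomorphism}: if $(\mathrm{GP})$ holds, then for each $k$ and the corresponding $m$ one has $\|c\|_k^{\lambda^1}=\sum_\gamma|c_\gamma|a_{\gamma,k}\leq\bigl(\sum_\gamma a_{\gamma,k}a_{\gamma,m}^{-1}\bigr)\,\|c\|_m^{\lambda^\infty}$, so the $\lambda^1$-seminorms are dominated by the $\lambda^\infty$-seminorms and $j$ is an isomorphism; conversely, testing an inequality $\|c\|_k^{\lambda^1}\leq C\|c\|_m^{\lambda^\infty}$ on the vectors $c=\sum_{\gamma\in F}\pm e_\gamma/a_{\gamma,m}$ over finite $F\subset\N_0^d$ recovers $(\mathrm{GP})$.

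This disposes of the ``if'' direction. Assume $\lambda^1$ is nuclear, so $j$ is a topological isomorphism. From $S\circ T=j$ the operator $P:=T\circ j^{-1}\circ S\colon E\to E$ is a continuous projection, since $P^2=Tj^{-1}(S T)j^{-1}S=Tj^{-1}(j)j^{-1}S=Tj^{-1}S=P$, its range is $T(\lambda^1)$ (because $PT=Tj^{-1}ST=T$), and $T\colon\lambda^1\to T(\lambda^1)$ is a topological isomorphism with inverse $j^{-1}\circ S|_{T(\lambda^1)}$. If $T$ has dense image, then $T(\lambda^1)=\operatorname{im}P$ is both closed and dense in $E$, hence $E=T(\lambda^1)\cong\lambda^1$ is nuclear. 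If instead $S$ is injective, then $\ker P=\ker S=\{0\}$, so $P=\operatorname{Id}_E$ and again $E=T(\lambda^1)\cong\lambda^1$ is nuclear.

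For the ``only if'' direction I would argue directly with the local Banach spaces $E_p$. Assume $E$ is nuclear and fix $k\in\N$; the goal is an $m$ with $\sum_\gamma a_{\gamma,k}a_{\gamma,m}^{-1}<+\infty$. Choose a continuous seminorm $p$ on $E$ with $\|Sx\|_k^{\lambda^\infty}\leq p(x)$ and iterate the defining property of nuclear spaces to get continuous seminorms $p\leq q_1\leq q_2\leq q_3$ on $E$ such that each linking map $E_{q_{i+1}}\to E_{q_i}$ and $E_{q_1}\to E_p$ is nuclear; enlarging $q_3$ if necessary one also has $q_3(Tc)\leq C\|c\|_m^{\lambda^1}$ for a suitable $m$. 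Composing the map $(\lambda^1)_m\to E_{q_3}$ induced by $T$ with these three nuclear linking maps and with the map $E_p\to(\lambda^\infty)_k$ induced by $S$, and using $S\circ T=j$, one sees that the diagonal operator $(\lambda^1)_m\to(\lambda^\infty)_k$ with entries $a_{\gamma,k}a_{\gamma,m}^{-1}$ is a composition of three nuclear operators between Banach spaces flanked by two bounded ones. The quantitative heart of the argument is then that such a composition is regular enough to force $\sum_\gamma a_{\gamma,k}a_{\gamma,m}^{-1}<+\infty$: one splices the nuclear factorizations through $\ell^1$ and $\ell^\infty$, routes the bounded blocks $\ell^\infty\to\ell^1$ that appear through a Hilbert space by Grothendieck's theorem, and reads off summability of the diagonal entries. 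That is exactly $(\mathrm{GP})$, so $\lambda^1$ is nuclear. The remaining case, in which $S$ is injective instead of $T$ having dense image, is obtained by transposing the whole diagram: $T'\colon E'\to(\lambda^1)'$ is injective, $j'$ is injective because $j$ has dense image, $E'$ is nuclear because $E$ is, the strong duals of $\tilde\lambda^1(A^*)$ and $\tilde\lambda^\infty(A^*)$ are K\"othe co-echelon spaces of the same type, and the same reasoning now applies with ``injective'' playing the role of ``dense range''; since $(\lambda^1)'$ is nuclear iff $\lambda^1$ is, we conclude.

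The main obstacle is precisely this last quantitative step. A \emph{single} nuclear factorization only shows that the diagonal operator $(\lambda^1)_m\to(\lambda^\infty)_k$ is nuclear, and since a diagonal operator $\ell^1\to c_0$ is nuclear as soon as it is compact, this merely yields $a_{\gamma,k}a_{\gamma,m}^{-1}\to0$, i.e.\ the Schwartz property of $\lambda^\infty$, which is strictly weaker than $(\mathrm{GP})$. Producing genuine $\ell^1$-summability is what forces the iteration of the nuclear factorization (so that the compound operator gains extra regularity, e.g.\ summable approximation numbers) and the use of Grothendieck's inequality on the bounded blocks; carrying out this bookkeeping carefully, and matching it with the dual argument in the injective case, is where one follows Petzsche~\cite{P} most closely.
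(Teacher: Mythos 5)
First, note that the paper does not prove this statement at all: it is quoted verbatim from Petzsche \cite[Thm.~3.5]{P} ``for the convenience of the reader'', so there is no internal proof to compare against and your attempt has to stand on its own. Your first half does. The reduction of nuclearity of $\lambda^1$ to the Grothendieck--Pietsch condition, the observation that this condition is equivalent to $j$ being a topological isomorphism, and the deduction that $P:=T\circ j^{-1}\circ S$ is a continuous idempotent with $\Im P=T(\lambda^1)$ and $\ker P=\ker S$ are all correct; an injective idempotent is the identity, and a closed dense range is everything, so in either case $E\cong\lambda^1$ and the implication ``$\lambda^1$ nuclear $\Rightarrow E$ nuclear'' is completely proved. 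This is a clean, self-contained argument.

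The converse is where there is a genuine gap, and I do not believe the route you sketch can be completed in the form you describe. Factoring the canonical map $(\lambda^1)_m\to(\lambda^\infty)_k$ (a diagonal operator $\ell^1\to\ell^\infty$ with entries $a_{\gamma,k}a_{\gamma,m}^{-1}$) through one, three, or any fixed number of nuclear linking maps of $E$ cannot by itself yield $\sum_\gamma a_{\gamma,k}a_{\gamma,m}^{-1}<+\infty$, because the diagonal of an operator $\ell^1\to\ell^\infty$ is not controlled by any of the operator ideals you invoke. Concretely, if $H$ is an $n\times n$ Hadamard matrix with rows $r_1,\dots,r_n$, then $\mathrm{id}=\frac1n\sum_k r_k\otimes r_k$ exhibits the identity $\ell^1_n\to\ell^\infty_n$ as an operator of nuclear norm at most $1$, uniformly in $n$, while its diagonal sums to $n$; taking block-diagonal sums of such identities produces nuclear (indeed, after composing with further $2$-summing factors, operators with summable approximation numbers, since $\sum_j a_j(\mathrm{id}\colon\ell^1_n\to\ell^\infty_n)=o(n)$) diagonal operators $\ell^1\to\ell^\infty$ whose diagonals are not summable. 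The Cauchy--Schwarz splitting through a Hilbert space that you allude to founders on the same point: the unit vectors of $\ell^1$ are not weakly $2$-summable, so $\sum_\gamma\|Ae_\gamma\|_H^2$ cannot be bounded. A symptom of the problem is that your argument for this direction never uses the hypotheses that $j$ has dense image and that $S$ is injective or $T$ has dense image; these must enter essentially. The known proofs (Petzsche's, and the analogous step in the Grothendieck--Pietsch criterion via the Dynin--Mitiagin theorem) exploit the biorthogonal system $f_\gamma=Te_\gamma$, $f_\gamma^*=S'\epsilon_\gamma$ and the basis-type structure it inherits from the sandwich $\lambda^1\to E\to\lambda^\infty$, not merely the nuclearity of the linking maps of $E$. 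As it stands, your ``only if'' direction establishes only that $\lambda^\infty$ is Schwartz, which you yourself note is strictly weaker than nuclearity.
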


We can now prove the following:
\begin{Prop}
\label{thSMp}
Let $(M_p)_p$ be a sequence satisfying $M_p^{1/p}\to+\infty$ as $p\to+\infty$, condition \eqref{12L} and $(M1)$.
Then $\Sch_{(M_p)}(\R^d)$ is nuclear if and only if the associated function $M(t)$ satisfies
\beqs
\label{3BJO-Rodino}
\exists H>1\ \mbox{s.t.}\ M(t)+\log t\leq M(Ht)+H,\quad\forall t>0.
\eeqs
\end{Prop}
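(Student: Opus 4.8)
The plan is to use the isomorphism between $\Sch_{(M_p)}(\R^d)$ and a K\"othe sequence space induced by the Hermite expansion, and then to translate the Grothendieck--Pietsch nuclearity criterion for that sequence space into the growth condition \eqref{3BJO-Rodino} on the associated function $M(t)$. Concretely, following Langenbruch~\cite{L} (whose argument requires exactly \eqref{12L} and $(M1)$, together with $M_p^{1/p}\to+\infty$), the Hermite functions $(H_\gamma)_{\gamma\in\N_0^d}$ form an absolute Schauder basis of $\Sch_{(M_p)}(\R^d)$, and the coefficient map $f\mapsto(\langle f,H_\gamma\rangle_{L^2})_\gamma$ realizes $\Sch_{(M_p)}(\R^d)$ as (isomorphic to) the space $\lambda^\infty=\tilde\lambda^\infty(A^*)$ with $A^*=(a_{\gamma,k})=(e^{M(k|\gamma|^{1/2})})$ from \eqref{ank}. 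The role of Petzsche's Theorem~\ref{th35P} is to pass from the $\ell^\infty$-type space attached to this matrix to the corresponding $\ell^1$-type space $\lambda^1$: one builds the commutative triangle with $E=\Sch_{(M_p)}(\R^d)$, where $T$ and $S$ are (restrictions/corestrictions of) the Hermite coefficient isomorphism and $j$ is the canonical dense inclusion $\lambda^1\hookrightarrow\lambda^\infty$; density of $j$ holds because the finitely supported sequences are dense in both, and one of $S$, $T$ then has the required injectivity/dense-image property. Hence $\Sch_{(M_p)}(\R^d)$ is nuclear iff $\lambda^1=\tilde\lambda^1(A^*)$ is nuclear.

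Next I would apply the Grothendieck--Pietsch criterion (as in Theorem~\ref{prop2816MV}(c), with $\Lambda=\N_0^d$) to $\tilde\lambda^1(A^*)$: it is nuclear if and only if
\[
\forall k\in\N\ \exists m\geq k:\quad \sum_{\gamma\in\N_0^d} e^{M(k|\gamma|^{1/2})-M(m|\gamma|^{1/2})}<+\infty .
\]
The summand depends on $\gamma$ only through $|\gamma|$, and the number of $\gamma\in\N_0^d$ with $|\gamma|^2=j$ grows polynomially in $j$; so, after grouping, convergence of the series is equivalent to
\[
\forall k\ \exists m\geq k:\quad \sum_{j\geq 1} j^{d}\, e^{M(k\sqrt{j})-M(m\sqrt{j})}<+\infty .
\]
The task is then purely a matter of comparing this with \eqref{3BJO-Rodino}.

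The translation between the two is the technical core, and here I would import the techniques of Petzsche~\cite{P} on associated functions. One shows that \eqref{3BJO-Rodino} — the statement that there is $H>1$ with $M(t)+\log t\le M(Ht)+H$ — is equivalent, by iteration, to: for every $k$ there are $m\ge k$ and constants so that $M(k t)+(d+2)\log(1+t)\le M(mt)+C$ for all $t>0$ (replace $H$ by a large power $H^N$ to absorb the polynomial factor $j^d$ and a logarithmic safety margin; note $M$ is increasing and, under $(M1)$, $M(t)/\log t\to\infty$ so such absorption is legitimate). With $t=\sqrt{j}$ this bound makes $\sum_j j^d e^{M(k\sqrt j)-M(m\sqrt j)}\le e^C\sum_j j^d (1+\sqrt j)^{-(d+2)}<+\infty$, giving nuclearity. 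Conversely, if \eqref{3BJO-Rodino} fails, then for every $H>1$ (in particular along $H=2^\ell$) the inequality $M(t)+\log t\le M(Ht)+H$ is violated for arbitrarily large $t$; a standard argument using the convexity/monotonicity properties of $M$ then forces, for a suitable $k$, the sum $\sum_j j^d e^{M(k\sqrt j)-M(m\sqrt j)}$ to diverge for every $m\ge k$, so the Grothendieck--Pietsch series diverges and $\tilde\lambda^1(A^*)$ — hence $\Sch_{(M_p)}(\R^d)$ — is not nuclear.

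The main obstacle I anticipate is precisely this last equivalence: carefully matching the ``$\forall k\,\exists m$'' quantifier structure of the Grothendieck--Pietsch condition with the single-constant form of \eqref{3BJO-Rodino}, and handling the change of variable $t\mapsto\sqrt t$ together with the polynomial counting factor $j^d$ without losing or gaining a condition. This is where one must be careful with the hypotheses actually in force ($M_p^{1/p}\to\infty$, $(M1)$, \eqref{12L}) — in particular $(M1)$ guarantees $M$ is the Young-type conjugate of a convex function, so that $t\mapsto M(t)$ has the regularity needed to iterate \eqref{3BJO-Rodino} and to run the converse direction; the necessity part is where Petzsche's methods from~\cite{P} are essential and where the proof is least routine.
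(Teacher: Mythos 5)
Your overall route is the same as the paper's: sandwich $\Sch_{(M_p)}(\R^d)$ between $\lambda^1=\tilde\lambda^1(A^*)$ and $\lambda^\infty=\tilde\lambda^\infty(A^*)$ via the Hermite coefficient maps $T$ and $S$ (continuous by Langenbruch's Theorem~3.4 under \eqref{12L}), invoke Petzsche's Theorem~\ref{th35P} to reduce nuclearity of $\Sch_{(M_p)}(\R^d)$ to nuclearity of $\lambda^1$, apply the Grothendieck--Pietsch criterion of Theorem~\ref{prop2816MV}, and identify convergence of the resulting series with \eqref{3BJO-Rodino}. The last equivalence, which you correctly flag as the technical core, is handled in the paper simply by citing the proof of \cite[Thm.~1]{BJO-Rodino} (condition \eqref{48} and its equivalence with \eqref{3BJO-Rodino}); your sketch of absorbing the polynomial counting factor by iterating \eqref{3BJO-Rodino} is consistent with that reference.

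There is, however, one step whose justification as written would fail: you claim $j(\lambda^1)$ is dense in $\lambda^\infty$ ``because the finitely supported sequences are dense in both.'' Finitely supported sequences are \emph{not} in general dense in a weighted $\ell^\infty$-type space; their closure there is the corresponding $c_0$-type space $\tilde c_0(A^*)$. The density of $j$ therefore requires first proving that $\tilde\lambda^\infty(A^*)=\tilde c_0(A^*)$, and this is exactly where the hypotheses $M_p^{1/p}\to+\infty$ and $(M1)$ enter: by \cite[Lemma~3.2]{P} they give $e^{M(t/h)-M(t/h')}\to 0$ for $h>h'>0$, hence $a_{\gamma,k}a_{\gamma,m}^{-1}\to 0$ for suitable $m\geq k$, which forces every element of $\tilde\lambda^\infty(A^*)$ to lie in $\tilde c_0(A^*)$; only then does density of the finitely supported sequences (which do lie in $\lambda^1$) give density of $j$. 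Your proposal does not notice that $(M1)$ is needed at this point. A smaller remark: asserting that the coefficient map is an isomorphism of $\Sch_{(M_p)}(\R^d)$ \emph{onto} $\lambda^\infty$ overstates what Langenbruch provides and what is needed; the whole point of Petzsche's theorem is that continuity of $S$ and $T$ plus injectivity of $S$ (or density of the image of $T$) suffices.
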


\begin{proof}
We shall use Theorem~\ref{th35P} with $E=\Sch_{(M_p)}(\R^d)$.
We observe that $\lambda^1\subseteq\lambda^\infty$ and denote by $j$ the inclusion
\beqsn
j:\ \lambda^1\longrightarrow \lambda^\infty.
\eeqsn
Let us consider the linear map
\beqsn
S:\ \Sch_{(M_p)}(\R^d)&&\longrightarrow\lambda^\infty\\
f&&\longmapsto (c_\gamma)_{\gamma\in\N_0^d}:=(\xi_\gamma(f))_{\gamma\in\N_0^d},
\eeqsn
where 
\beqsn
\xi_\gamma(f)=\int_{\R^d}f(x)H_\gamma(x)dx
\eeqsn
are the Hermite coefficients of $f$, and then the linear map
\beqsn
T:\ \lambda^1&&\longrightarrow\Sch_{(M_p)}(\R^d)\\
(c_\gamma)_{\gamma\in\N_0^d}&&\longmapsto \sum_{\gamma\in\N_0^d}
c_\gamma H_\gamma(x).
\eeqsn

In Theorem~3.4 of \cite{L} it was proved that condition \eqref{12L} implies that $S$
and $T$ are continuous.
%
%
Note also that the diagram in Theorem~\ref{th35P} commutes by the uniqueness
of the coefficients with respect to the Schauder basis
$(H_\gamma)_{\gamma\in\N_0^{d}}$.


Let us prove that $j$ has dense image. By conditions
$M_p^{1/p}\to+\infty$ and
(M1), and  by \cite[Lemma~3.2]{P}, we have
\beqsn
\lim_{t\to+\infty}e^{M(t/h)-M(t/h')}=0,\qquad\mbox{if}\ h>h'>0.
\eeqsn
Therefore, for every $k\in\N$ there exists $m\in\N$, $m>k$, such that
\beqsn
\lim_{|\gamma|\to+\infty}a_{\gamma,k}a_{\gamma,m}^{-1}=
\lim_{|\gamma|\to+\infty}e^{M(k|\gamma|^{1/2})-M(m|\gamma|^{1/2})}=0,
\eeqsn
and hence $\lambda^\infty=\tilde c_0(A^*)$, by the same arguments we used to
prove that \eqref{star} implies $\tilde\lambda^\infty(A)=\tilde c_0(A)$ in Section~\ref{sec3}.
Then $j(\lambda^1)$ is dense in $\lambda^\infty= \tilde c_0(A^*)$ because
\beqsn
{\tilde c}_{00}(A^*):=\{(c_\gamma)_{\gamma\in\N_0^d}\in \tilde c_0(A^*):\ c_\gamma=0\ \mbox{except that for a finite number of
indexes}\}
\eeqsn
is dense in $\tilde c_0(A^*)$ and is contained in $\lambda^1$.

Moreover, $S$ is injective. Hence, by Theorem~\ref{th35P},
$E=\Sch_{(M_p)}(\R^d)$ is nuclear if and only if $\lambda^1$ is
nuclear. By Theorem~\ref{prop2816MV}, the sequence space
$\lambda^1$ is
nuclear if and only if
\beqs
\label{condseries}
\forall k\in\N\ \exists m\in\N, m\geq k\ \mbox{s.t.}\quad
\sum_{\gamma\in\N_0^d}e^{M(k|\gamma|^{1/2})-M(m|\gamma|^{1/2})}<+\infty.
\eeqs
The
series in \eqref{condseries} converges if and only if 
\beqs
\label{48}
M(t)+N\log t\leq M(H^Nt)+C_{N,H},\qquad\forall N\in\N,
\eeqs
for some $C_{N,H}>0$ and $N>2d$ (see the proof of \cite[Thm. 1]{BJO-Rodino}). This gives the conclusion since \eqref{48} is equivalent to \eqref{3BJO-Rodino} (see again the proof of \cite[Thm. 1]{BJO-Rodino}).
\end{proof}

\begin{Th}
\label{corSMp}
Let $(M_p)_p$ be a sequence satisfying $M_p^{1/p}\to+\infty$ as $p\to+\infty$, condition \eqref{12L} and $(M1)$.
Then $\Sch_{(M_p)}(\R^d)$ is nuclear if and only if
 $(M2)'$ holds.
\end{Th}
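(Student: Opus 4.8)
The statement to prove is that, under the standing hypotheses ($M_p^{1/p}\to+\infty$, \eqref{12L} and $(M1)$), nuclearity of $\Sch_{(M_p)}(\R^d)$ is equivalent to condition $(M2)'$. By Proposition~\ref{thSMp}, nuclearity is already known to be equivalent to the condition \eqref{3BJO-Rodino} on the associated function $M(t)$, namely that there is $H>1$ with $M(t)+\log t\le M(Ht)+H$ for all $t>0$. So the whole task reduces to showing, under $(M1)$ and $M_p^{1/p}\to+\infty$, the equivalence
\[
(M2)'\quad\Longleftrightarrow\quad \eqref{3BJO-Rodino}.
\]
This is a purely ``sequence versus associated function'' translation, of exactly the classical Komatsu/Petzsche type, so the plan is to invoke (and, where needed, reprove in the Beurling/log-convex setting) the standard correspondence between growth conditions on $(M_p)_p$ and on $M(t)$.

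First I would recall that under $(M1)$ the sequence $(M_p)_p$ is log-convex, so $M(t)$ is (essentially) its Legendre/Young conjugate and the inversion $M_p = M_0\sup_{t>0} t^p e^{-M(t)}$ holds; in particular $p\mapsto M_{p+1}/M_p$ is nondecreasing. The classical fact (Komatsu, \cite{Komatsu}, or \cite{P}) is that, for log-convex $(M_p)_p$, condition $(M2)'$ (i.e.\ $M_{p+1}\le AH^pM_p$) is equivalent to the statement that $M(t)$ grows no faster than a fixed dilation allows an extra logarithmic factor, which is precisely \eqref{3BJO-Rodino}. Concretely: the direction $(M2)'\Rightarrow\eqref{3BJO-Rodino}$ follows by estimating $M(t)=\sup_p\log(t^pM_0/M_p)$ and using $M_{p+1}\ge A^{-1}H^{-p}M_p$ to compare the supremum defining $M(Ht)$ with $t\cdot$ that defining $M(t)$; the shift of index by one in the supremum produces exactly the extra factor $t$ (up to the constant $H$). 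For the converse, \eqref{3BJO-Rodino} iterated $N$ times gives \eqref{48}, i.e.\ $M(t)+N\log t\le M(H^Nt)+C_{N,H}$, and then one recovers a bound on ratios $M_{p+N}/M_p$ by evaluating the associated-function inequality at the points $t$ where the respective suprema are (nearly) attained; choosing $N=1$ yields $(M2)'$. Here the hypothesis $M_p^{1/p}\to+\infty$ guarantees that $M(t)<+\infty$ for all $t$ and that these suprema are genuinely attained (or attained in the limit), which is what makes the back-and-forth rigorous.

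Concretely the proof I would write is short: cite Proposition~\ref{thSMp} to replace nuclearity by \eqref{3BJO-Rodino}; then prove the lemma ``$(M1)$ and $M_p^{1/p}\to\infty$ imply $(M2)'\Leftrightarrow\eqref{3BJO-Rodino}$'', either by a direct two-paragraph computation with the formulas $M(t)=\sup_p\log(t^pM_0/M_p)$ and $M_p=M_0\sup_{t}t^pe^{-M(t)}$, or by quoting the relevant statement from Komatsu or Petzsche \cite{P} (this equivalence is, in substance, \cite[Proposition~3.6]{Komatsu} together with the observation that $(M2)'$ is the ``$p\to p+1$'' instance). Combining the two gives the theorem.

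The main obstacle is not conceptual but bookkeeping: one must be careful that \eqref{3BJO-Rodino} is a condition ``for all $t>0$'' whereas the associated function is only interesting for large $t$ (for small $t$, $M(t)$ may be $0$), so the equivalence with $(M2)'$, which is ``for all $p\in\N_0$'', requires absorbing the small-$t$/small-$p$ discrepancies into the additive constant $H$ resp.\ the constants $A,H$ — this is exactly where log-convexity $(M1)$ and the normalization are used, and where one has to be slightly careful that the supremum defining $M(t)$ is attained at an index $p$ comparable to $t\,M_p/M_{p+1}$. I expect this to be the only delicate point; everything else is a direct transcription of the Komatsu--Petzsche dictionary, and the reader is anyway pointed to \cite{BJO-Rodino} and \cite{P} for the details of the equivalence $\eqref{48}\Leftrightarrow\eqref{3BJO-Rodino}$.
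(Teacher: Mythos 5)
Your proposal is correct and follows essentially the same route as the paper: reduce to Proposition~\ref{thSMp} and then use the equivalence, under $(M1)$, of $(M2)'$ with condition \eqref{3BJO-Rodino} on the associated function (the paper simply cites \cite[Rem.~1]{BJO-Rodino} for this, while you sketch the standard Komatsu--Petzsche argument for it). No gap; the extra detail you supply is exactly what the cited remark contains.
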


\begin{proof}
It follows from Proposition~\ref{thSMp} because, under condition (M1),
condition (M2)$'$ is equivalent to condition \eqref{3BJO-Rodino}
(see \cite[Rem.~1]{BJO-Rodino}).
\end{proof}

If $(M2)'$ is satisfied then $\Sch_{(M_p)}(\R^d)$ can be equivalently defined with
 $L^\infty$ norms as in \eqref{SMp-infty} (see \cite[Remark 2.1]{L}) and hence $S_{(M_p)}(\R^d)$ is nuclear (cf. \cite[Corollary 1]{BJO-Rodino}), but we cannot derive a characterization in terms of $(M2)'$ from the results of Langenbruch~\cite{L}.

\vskip\baselineskip
{\bf Acknowledgments.} 
The first three authors were partially supported by  the Project FFABR 2017 (MIUR),
and by the Projects FIR 2018 and FAR 2018 (University of Ferrara).
The first and third  authors are members of the Gruppo Nazionale per l'Analisi
Matematica, la Probabilit\`a e le loro Applicazioni (GNAMPA) of the Istituto Nazionale di Alta
Matematica (INdAM). The research of the second author was partially supported by the project MTM2016-76647-P and the grant BEST/2019/172 from Generalitat Valenciana. The fourth author is supporded by FWF-project J 3948-N35.

\end{document}